\newtheorem{theo}{Theorem}[section]
\newtheorem{prop}[theo]{Proposition}
\newtheorem{lemme}[theo]{Lemma}
\newtheorem{remarque}[theo]{Remark}
\newtheorem{cor}[theo]{Corollary}
\newcommand{\tran}{\mathsf{T}} 
\newcommand{\bs}{\boldsymbol}
\newcommand{\maxime}[1]{\textcolor{magenta}{[Maxime: #1]}\color{black}} 
\newcommand{\hafedh}[1]{\textcolor{cyan}{[Hafedh: #1]}\color{black}}
\newcommand{\jamal}[1]{\textcolor{red}{[Jamal: #1]}\color{black}}
\newcommand{\x}{\boldsymbol{x}}
\title[Equilibrium in a large Lotka-Volterra system]{Equilibrium in a large Lotka-Volterra system with pairwise correlated interactions}
\author{Maxime Clenet, Hafedh El Ferchichi, Jamal Najim }
\date{\today}
\thanks{Supported by CNRS Project 80 Prime | KARATE.}
\keywords{Linear systems; large random matrices; Gaussian concentration; Lotka-Volterra equations.}
\subjclass[2010]{Primary 15B52, 60G70, Secondary 60B20, 92D40}
\begin{document}
\maketitle
\mbox{}
\nomenclature{\(A_n\)}{Interaction matrix of size $n$, following the elliptic model, of coefficient $A_{kl}$}
\nomenclature{\(B^\top\)}{Transpose of matrix $B$}
\nomenclature{\(\mathbf{1}_n\)}{vector of size n, with all coefficients equal to 1}
\nomenclature{\(\alpha_n\)}{normalizing sequence, typically goes to $+\infty$}
\nomenclature{\(\alpha_n^* \)}{critical sequence, equals to $\sqrt{2\log n}$}
\nomenclature{\(\mathbf{x}\)}{Density vector, each coefficients $x_k$ representing the abondance of some species}
\nomenclature{\(\mathbb{P}(A)\)}{Probability measure of some event $A$}
\nomenclature{\(\mathbb{E}(X)\)}{Expectation of $X$}
\nomenclature{\(I_n\)}{Identity matrix of size $n$}
\nomenclature{\(\mathbf{e}_k\)}{k-th vector of the canonical base}
\nomenclature{\(\xrightarrow[n\to\infty]{\quad \mathcal D\quad}\)}{Convergence in distribution}
\nomenclature{\(\rho\)}{Co-variance parameter in the elliptic model}
\nomenclature{\(\mu\)}{Mean parameter in the elliptic model}
\nomenclature{\(G(x)\)}{Gumbel distribution, evaluated at x: $G(x) = e^{-e^{-x}}$}
\nomenclature{\(\xrightarrow[n \to \infty]{\mathcal{P}}\)}{Convergence in probability}
\nomenclature{\(cov(X,Y)\)}{Covariance of X and Y}
\nomenclature{\(\mathbb{R},\mathbb{C}\)}{real/complex number}
\nomenclature{\(\mathcal{O}, o, \sim\)}{Standard Landau notations}
\nomenclature{\(\mathcal{N}(\mu, \rho)\)}{Normal distribution, of mean $\mu$, and variance $\rho$}
\nomenclature{\(B^\top\)}{Transpose of matrix $B$}
\nomenclature{\(B^\top\)}{Transpose of matrix $B$}


\begin{abstract}
Consider a Lotka-Volterra (LV) system of coupled differential equations:
$$
\dot{x}_k= x_k(r_k - x_k + (B\bs{x})_k)\,,\quad  \bs{x}=(x_k)\,,\quad  1\le k\le n\, , 
$$
where $\bs{r}=(r_k)$ is a $n\times 1$ vector and $B$ a $n\times n$ matrix. Assume that the interaction matrix $B$ is random and follows the elliptic model:
$$
B=\frac 1{\alpha\sqrt{n}} A + \frac{\mu}{n} \mathbf{1}_{n}\mathbf{1}_{n}^\tran\ ,
$$
where $A=(A_{ij})$ is a $n\times n$ matrix with ${\mathcal N}(0,1)$ entries satisfying the following dependence structure $(i)$ the entries $A_{ij}$ on and above the diagonal are i.i.d., $(ii)$ for $i< j$ each vector $(A_{ij}, A_{ji})$ is standard gaussian with covariance $\rho$, and independent from the other entries; vector $\mathbf{1}_{n}$ stands for the $n\times 1$ vector of ones. Parameters $\alpha,\mu$ are deterministic and may depend on $n$.

Leveraging on Random Matrix Theory, we analyse this LV system as $n\to \infty$ and study the existence of a positive equilibrium. This question boils down to study the existence of a (componentwise) positive solution to the linear equation:
$$
\x_n = \bs{r}_n + B_n \x_n\, ,
$$
depending on $B$'s parameters $(\alpha, \mu,\rho)$, a problem of independent interest in linear algebra.

In the case where no positive equilibrium exists, we provide sufficient conditions for the existence of a unique stable equilibrium (with vanishing components), and following Bunin \cite{bunin2017ecological}, present a heuristics estimating the number of positive components of the equilibrium and their distribution.     

The existence of positive equilibria for large Lotka-Volterra systems has been raised in Dougoud et al. \cite{dougoud2018feasibility}, and addressed in various contexts by Najim et al. \cite{akjouj2021feasibility,bizeul2021positive}.

Such LV systems are widely used in mathematical biology to model populations with interactions, and the existence of a positive equilibrium known as a {\em feasible equilibrium} corresponds to the survival of all the species $x_k$ within the system.

\end{abstract}

\section{Introduction}

\subsection*{Lotka-Volterra system of coupled differential equations.} Lotka-Volterra (LV) systems are widely used in mathematical biology, ecology, chemistry to model populations with interactions or chemical reactions \cite{gopalsamy1984global,hofbauer1998evolutionary,kiss2008qualitative,hering1990oscillations}.
In the context of theoretical ecology (that we shall adopt hereafter without loss of generality), consider a given foodweb and denote by $\x_n^t=(x_k(t))_{1\le k\le n}$ the vector of abundances\footnote{A species abundance is a quantity proportional to the number of individuals for this species.} of the various species at time $t\ge 0$. 
In a LV system, the abundances are connected via the following coupled equations:
$$
\frac{dx_k(t)}{dt} = x_k(t)\, \left( r_k - x_k(t) +  \sum_{\ell=1}^n B_{k\ell} x_{\ell}(t)\right)\qquad \textrm{for}\quad k\in [n]:=\{1,\cdots, n\}\, ,
$$
where $B_n=(B_{k\ell})$ stands for the interaction matrix, and $r_k$ stands for the intrinsic growth of species $k$.
Notice that standard results yield that if the initial condition $\bs{x}_n^0=\bs{x}_n|_{t=0}$ is componentwise positive, then $\bs{x}_n^t$ remains positive for every $t>0$. 

At the equilibrium $\frac{d\x_n}{dt}=0$, the abundance vector $\x_n=(x_k)_{k\in [n]}$ is solution of the system:
\begin{equation}\label{eq:equilibrium}
x_k\, \left( r_k - x_k +  \sum_{\ell\in[n]} B_{k\ell} x_{\ell}\right)=0\qquad \textrm{for}\quad x_k\ge 0\quad \text{and}\quad k\in [n]\ .
\end{equation}

An important question, which motivated recent developments \cite{dougoud2018feasibility,bizeul2021positive}, is the existence of a {\it feasible} solution $\x_n$ to \eqref{eq:equilibrium}, that is a solution where all the $x_k$'s are positive, corresponding to a scenario where no species disappears. Notice that in this latter case, the system \eqref{eq:equilibrium} takes the much simpler form:
$$
\x_n = \boldsymbol{r}_n + B_n \x_n\ ,\qquad \boldsymbol{r}_n=(r_k)_{k\in [n]}\, .
$$
In this article, we will investigate the existence of an equilibrium, potentially feasible, for a large foodweb ($n\to \infty$) whenever the interaction matrix $B_n$ is random. In various models of interest for $B_n$, Random Matrix Theory (RMT) provides an accurate description of the asymptotic properties of a large random matrix (its spectrum, spectral norm, etc.). We will leverage on RMT to infer the existence of an equilibrium in the case where $B_n$ follows a random elliptic model, to be described hereafter.  

To simplify the analysis, we will consider the case where $\boldsymbol{r}_n=\bs{1}_n$.
\subsection*{Random elliptic model for the interaction matrix} In the spirit of May\footnote{Beware that May did not consider LV systems but rather used a random matrix model for the Jacobian at equilibrium of a generic system of coupled differential equations.}, we model the interaction matrix $B_n$ as a non-centered random matrix with pairwise correlated entries:
\begin{equation}\label{eq:elliptic-model}
    B_n = \frac{A_n}{\alpha_n \sqrt{n}}+\frac{\mu}n \boldsymbol{1}_n \boldsymbol{1}^\tran_n \, ,
\end{equation}
where $A_n=(A_{ij})_{i,j\in [n]}$ is a random matrix satisfying the two conditions $(i)$ $(A_{ij}, i\le j)$ are standard Gaussian ${\mathcal N}(0,1)$ independent and identically distributed (i.i.d.) random variables $(ii)$ for $i<j$ the vector $(A_{ij},A_{ji})$ is a standard bivariate Gaussian vector, independent from the remaining random variables, with covariance $\mathrm{cov}(A_{ij}, A_{ji})=\rho$ with $|\rho|\le 1$. The sequence of positive numbers $(\alpha_n)$ is either fixed or goes to infinity. Parameter $\mu$ is a fixed real number. As a consequence, the Gaussian entries of the interaction matrix $B_n$ admit the following moments:
$$
\mathbb{E}(B_{ij}) = \frac {\mu}n\,,\quad \mathrm{var}(B_{ij}) = \frac 1{\alpha^2 n}\,,\quad \mathrm{cov}(B_{ij},B_{ji})=\frac \rho{\alpha^2 n}\  (i\neq j)\,.
$$
Such a matrix model is often called a random elliptic model for $|\rho|<1$ since the spectrum of matrix $A_n/\sqrt{n}$ is asymptotically an ellipse, see Fig.\ref{fig:elliptic-model}, in the sense that the empirical distribution of the eigenvalues of $A_n/\sqrt{n}$ converges towards the uniform distribution on the ellipsoid $${\mathcal E}_\rho=\left\{
z\in \mathbb{C},\ \frac{\mathrm{Re}^2(z)}{(1+\rho)^2} + \frac{\mathrm{Im}^2(z)}{(1-\rho)^2} \le 1
\right\}\, .$$
Originally introduced by Girko \cite{girko1986elliptic}, this model has since been widely studied \cite{girko1995elliptic,naumov2012elliptic,nguyen2015elliptic,orourke2014low}. 

\begin{figure}[htb]
\centering
\begin{subfigure}[b]{.32\textwidth}
    \includegraphics[width=\textwidth]{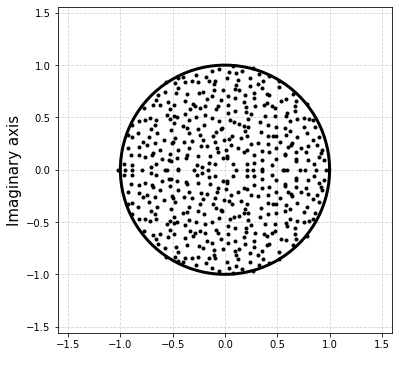}
    \caption{$\rho=0$}
    \label{subfig:SC}
  \end{subfigure}%
  \hfill   
  \begin{subfigure}[b]{0.32\textwidth}
    \includegraphics[width=\textwidth]{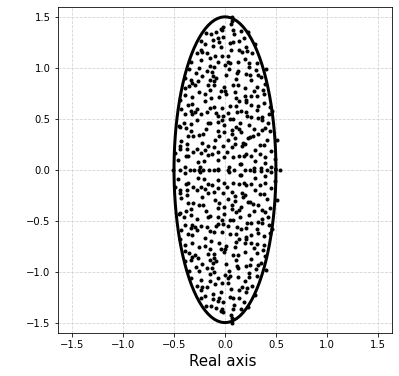}
    \caption{$\rho=-0.5$}
    \label{subfig:SC-outlier}
  \end{subfigure}%
\hfill   
  \begin{subfigure}[b]{0.32\textwidth}
    \includegraphics[width=\textwidth]{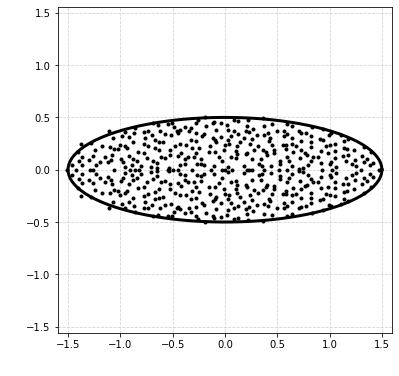}
    \caption{$\rho=0.5$}
    \label{subfig:SC-outlier}
  \end{subfigure}%
\caption{Spectrum of non-Hermitian matrix $B_n$ ($n=500$) in the centered case ($\mu$ = 0) with distinct parameter $\rho \in \{-0.5 ,\ 0 ,\ 0.5\}$. The solid line represents the ellipse $\{ z=x+\bs{i}y\in \mathbb{C},\, \frac{x^2}{(1+\rho)^2} + \frac{y^2}{(1-\rho)^2} =1\} $ which is the boundary of the support of the limiting spectral distribution for an elliptic model.}
\label{fig:elliptic-model}
\end{figure}

The spectral norms of $A_n$ and $\boldsymbol{1}_n \boldsymbol{1}_n^\tran$ satisfy
$$\left\| \frac {A_n}{\sqrt{n}} \right\| ={\mathcal O}\left(1\right)\quad \textrm{and}\quad  \left\| \frac{1}n \boldsymbol{1}_n \boldsymbol{1}_n^\tran\right\|=1$$
hence both the random and deterministic parts of the interaction matrix $B_n$ may have an impact as $n\to\infty$.

\subsection*{Presentation of the main results} In this article, we address the following issues. 

\subsubsection*{Feasibility.} We first describe the conditions over parameters $(\rho, \alpha_n, \mu)$ for which system \eqref{eq:equilibrium} admits a unique feasible equilibrium. We prove that feasibility is reached whenever $\alpha_n\gg \sqrt{2\log(n)}$ and $\mu<1$, and that there is no feasibility otherwise, see Theorem \ref{th:feasibility}. Notice that the correlation parameter $\rho$ has no influence since the phase transition threshold is the same as in the i.i.d. case \cite{bizeul2021positive}: the induced correlations between components $x_k$'s of solution $\bs{x}_n$ are too weak. Pushing this remark further, we prove that the same phase transition holds if we consider a covariance profile $(\rho_{ij},\, i<j)$ where $\rho_{ij}=\mathrm{cov}(A_{ij},A_{ji})$ instead of a fixed covariance parameter $\rho$.

 In \cite{bizeul2021positive}, Bizeul and Najim established the conditions for feasibility in the centered ($\mu=0$) model with i.i.d interactions $(A_{ij})$. In \cite{akjouj2021feasibility}, Akjouj and Najim studied a sparse model of interactions where only $d_n\ge \log(n)$ interactions are non-null in each row and column of $A_n$. The study of the feasibility for an elliptic model completes this picture.

\subsubsection*{Stability without feasibility.} 
If $\alpha$ is fixed, Dougoud et al. \cite{dougoud2018feasibility} showed that no feasible solution can arise.
Under this assumption, we establish in Proposition \ref{prop:unique-equilibrium} sufficient conditions for the existence of a unique stable equilibrium to system \eqref{eq:equilibrium}. In this case, some species will vanish (some of the components $x_k$'s of solution $\bs{x}_n$ are equal to zero). In order to proceed we combine results by Takeuchi \cite{takeuchi1996global} on stability of LV systems with Random Matrix Theory (RMT) results.

\subsubsection*{Estimating the number of surviving species.}  We finally conclude with an important question: given a set of parameters $(\rho,\alpha,\mu)$ which yields to a unique stable equilibrium, is it possible to estimate the proportion of surviving species? From a mathematical point of view, this is an open question. At a physical level of rigor, Bunin \cite{bunin2017ecological} (relying on the cavity method) and Galla \cite{galla_dynamically_2018} (relying on generating functionals techniques) provide a closed-form system of equations to compute the proportion of surviving species. We state the open problem, recall Bunin's and Galla's equations and provide simulations. 

In \cite{clenet2022preprint}, equations and simulations are provided in the simpler case where $\rho=0$, together with heuristics supporting these equations.

\subsection*{Organisation of the article} Feasibility and stability results together with the open question on the estimation of the number of surviving species are presented in Section \ref{sec:main-results}. Section \ref{sec:proof-feasibility} is devoted to the proof of the feasibility result, Theorem \ref{th:feasibility}. Proof of the stability result, Proposition \ref{prop:unique-equilibrium}, is provided in Section \ref{sec:proof-stability}. Simulations were performed in Python. All the figures and the code are available
on Github \cite{code}.   

\subsection*{Notations} If $A$ is a matrix $A^\tran$ stands for its transpose. We denote by $\log(x)$ the natural logarithm. If $\bs{x}=(x_i)_{i\in [n]}$ is a vector, we denote by $\bs{x}>0$ (resp. $\bs{x}\ge 0$) the componentwise positivity (resp. non-negativity), that is the fact that $x_i>0$ (resp. $x_i\ge 0$) for every $i\in [n]$.

\section{Main results: Feasibility, stability and surviving species} \label{sec:main-results}

\subsection{Feasibility} To simplify the analysis, we consider the case where $r_k=1$ $(k \in [n])$. Hence, the LV system takes the following form in the sequel:
\begin{equation}\label{eq:LV}
\frac{dx_k(t)}{dt} = x_k(t)\, \left( 1 - x_k(t) +  \sum_{\ell\in[n]} B_{k\ell} x_{\ell}(t)\right)\qquad \textrm{for}\quad k\in [n]\, .
\end{equation}
In the next theorem, we describe the conditions to reach a feasible equilibrium. We either assume that matrix $B$ is given by the elliptic model or has a more general covariance profile.

\begin{theo}[Feasibility for the elliptic model] \label{th:feasibility}
Assume that matrix $B_n$ is given by the elliptic model \eqref{eq:elliptic-model}, or that $B_n$ has a covariance profile, i.e.
\begin{equation}\label{eq:cov-model}
B_n =\frac{\tilde A_n}{\alpha_n \sqrt{n}} +\frac{\mu}n \boldsymbol{1}_n \boldsymbol{1}_n^\tran\ ,
\end{equation}
where $\tilde{A}_n$ is a $n\times n$ matrix with entries $(\tilde A_{ij}, i\le j)$ i.i.d. ${\mathcal N}(0,1)$ and where $(\tilde A_{ij}, \tilde A_{ji})$ is a standard bivariate gaussian vector for $i<j$, independent from the remaining random variables, with covariance $\mathrm{cov}(\tilde A_{ij}, \tilde A_{ji})=\rho_{ij}^{(n)}$, 
where $(\rho_{ij}^{(n)}; \, i<j; n\ge 1)$ is a collection of deterministic real numbers in $[-1,1]$.

Let $\alpha_n \xrightarrow[n\to\infty]{} \infty$ and denote by $\alpha_n^*=\sqrt{2\log n}$. If $\mu\neq 1$ then the following equation 
$$
\boldsymbol{x}_n = \mathbf{1}_n + B_n \boldsymbol{x}_n
$$
almost surely admits a unique solution $\boldsymbol{x}_n=(x_k)_{k\in [n]}$.
\begin{enumerate}
\item (feasibility) If $\mu<1$ and there exists $\varepsilon>0$ such that, for $n$ large enough, $\alpha_n\ge (1+\varepsilon)\alpha_n^*$ then
$$
\mathbb{P}\left\{ \min_{k\in [n]} x_k>0\right\} \xrightarrow[n\to\infty]{} 1\, .
$$
\item If $\mu>1$ or there exists $\varepsilon>0$ such that, for $n$ large enough, $\alpha_n\le (1-\varepsilon) \alpha_n^*$ then
$$
\mathbb{P}\left\{ \min_{k\in [n]} x_k>0\right\} \xrightarrow[n\to\infty]{} 0\, .
$$
\end{enumerate}
\end{theo}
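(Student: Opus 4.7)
My plan is to decouple the rank-one mean perturbation from the random part via the Sherman-Morrison identity, expand the resulting resolvent in a Neumann series, and identify the feasibility phase transition with a Gumbel-type extreme value statement for the minimum of a weakly correlated Gaussian vector.

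Write $B_n = N_n + (\mu/n)\mathbf{1}_n\mathbf{1}_n^\tran$ with $N_n := A_n/(\alpha_n\sqrt{n})$; the classical spectral radius bound for the elliptic (or covariance-profile) model gives $\|N_n\| = \mathcal{O}_P(1/\alpha_n)\to 0$. Almost sure unique solvability of $\x_n = \mathbf{1}_n + B_n\x_n$ follows because $\det(I-B_n)$ is a polynomial in the Gaussian entries that does not vanish identically (when $\mu\neq 1$, specialising $A_n=0$ gives determinant $1-\mu$). The Sherman-Morrison identity applied to $I-B_n = (I-N_n) - (\mu/n)\mathbf{1}_n\mathbf{1}_n^\tran$ yields
\begin{equation*}
\x_n \;=\; \frac{\bs{y}_n}{1 - \mu\,s_n},\qquad \bs{y}_n := (I - N_n)^{-1}\mathbf{1}_n,\qquad s_n := \frac{1}{n}\mathbf{1}_n^\tran \bs{y}_n.
\end{equation*}
A first-order Neumann expansion, combined with the fact that $\mathbf{1}_n^\tran A_n \mathbf{1}_n$ has variance $\mathcal{O}(n^2)$, gives $s_n = 1 + o_P(1)$, so $1-\mu s_n$ converges in probability to $1-\mu\neq 0$ with a definite sign.

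For the phase transition itself, I expand
\begin{equation*}
\bs{y}_n \;=\; \mathbf{1}_n + \frac{1}{\alpha_n\sqrt{n}}\,A_n\mathbf{1}_n + R_n,\qquad R_n := \sum_{k\ge 2} N_n^{k}\mathbf{1}_n,
\end{equation*}
and set $Z_k^{(n)} := (A_n\mathbf{1}_n)_k/\sqrt{n}$. The vector $(Z_k^{(n)})_{k\in[n]}$ is centered Gaussian with unit variances and pairwise covariances $\rho_{ij}^{(n)}/n$ (equal to $\rho/n$ in the elliptic case), hence satisfies Berman's condition and behaves, at the scale of extremes, like $n$ i.i.d.\ standard Gaussians. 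Two ingredients then suffice: (a) the remainder bound $\|R_n\|_\infty = o_P(1/\alpha_n)$, and (b) the extreme value asymptotic $\min_{k\in[n]} Z_k^{(n)}/\sqrt{2\log n} \to -1$ in probability, which reduces via Slepian's lemma to the classical i.i.d.\ Gumbel limit. Combining (a), (b), and the previous step, the sign of $(\bs{y}_n)_k = 1 + Z_k^{(n)}/\alpha_n + o_P(1/\alpha_n)$ flips precisely at $\alpha_n\asymp\alpha_n^*$: for $\mu<1$ one obtains feasibility when $\alpha_n\ge(1+\varepsilon)\alpha_n^*$ and non-feasibility when $\alpha_n\le(1-\varepsilon)\alpha_n^*$, while for $\mu>1$ the denominator is negative whereas $\max_k(\bs{y}_n)_k > 0$ with high probability (since $\max_k Z_k^{(n)}\to +\infty$), again precluding feasibility.

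The main technical obstacle is (a): operator-norm estimates only yield $\|R_n\|_2 = \mathcal{O}_P(\sqrt{n}/\alpha_n^2)$, which is too weak for coordinate-wise sign control at scale $1/\alpha_n$. My plan is to apply Gaussian concentration (Borell-Tsirelson) to each entry $(N_n^{k}\mathbf{1}_n)_j$, viewed as a Lipschitz function of the underlying Gaussian entries with Lipschitz constant shrinking by a factor $\|N_n\|/\sqrt{n}$ per iteration, then union bound over $j\in[n]$ before summing the resulting geometric series. A secondary concern is uniformity over the covariance profile $(\rho_{ij}^{(n)})$; fortunately both the single-coordinate law of $Z_k^{(n)}$ and the bound $\|N_n\|\to 0$ are insensitive to the profile as long as $|\rho_{ij}^{(n)}|\le 1$, so the extension from the elliptic case to arbitrary profiles requires no essentially new input.
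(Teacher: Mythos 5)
Your route is essentially the paper's: reduce the non-centered case by the rank-one Sherman--Morrison/Woodbury identity to $\x_n=\bs y_n/(1-\mu s_n)$ with $s_n\to 1$, expand $\bs y_n=\mathbf 1_n+\tfrac1{\alpha_n}Z^{(n)}+R_n$, handle the linear Gaussian term by extreme-value theory for the weakly correlated vector $(Z^{(n)}_k)$, and control the remainder by Gaussian concentration plus a union/max bound. However, your treatment of the remainder has two genuine gaps. First, the bound you announce, $\|R_n\|_\infty=o_P(1/\alpha_n)$, is both stronger than needed and not what your method can deliver at the critical scaling: Borell--TIS-type concentration with union bound gives at best $O_P(\sqrt{\log n}/\alpha_n^2)$, which is comparable to $1/\alpha_n$ when $\alpha_n\asymp\sqrt{2\log n}$. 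What the sign analysis actually requires is only $\max_k|(R_n)_k|=o_P(\alpha_n^*/\alpha_n)$ (this is the paper's Lemma \ref{lemme:fonda}, up to the $\alpha^{-2}$ normalization), and that weaker target your estimate does reach; as stated, though, your step (a) would fail near criticality. Second, concentration only controls $(R_n)_k-\mathbb{E}(R_n)_k$; you never bound the means. Controlling $\mathbb{E}\,e_k^\tran N_n^{\ell}\mathbf 1_n$ (equivalently $\mathbb{E}\widetilde R_k$) is a real ingredient of the proof: the paper does it via an exchangeability/permutation argument (Proposition \ref{prop:exch}) in the elliptic case and via an Isserlis/Wick comparison in the covariance-profile case. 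Relatedly, the coordinate maps $A\mapsto e_k^\tran N^{\ell}\mathbf 1$ are polynomials, hence not globally Lipschitz; you must first truncate on the event $\{\|A/\sqrt n\|\le C\}$ (the paper's smooth cutoff $\varphi_n$) and, for the correlated entries, write $A=\Gamma(X)$ as a Lipschitz image of i.i.d.\ Gaussians (Lemma \ref{lemma:lipschitz-gaussian-iid}) before invoking Gaussian concentration.

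Two further inaccuracies are worth flagging. Slepian's lemma gives only one-sided comparisons, and with a covariance profile the correlations $\rho^{(n)}_{ij}/n$ may have both signs, so it does not by itself yield the Gumbel behaviour of the minimum in the regime you need for non-feasibility; the quantitative two-sided Normal Comparison Lemma (paper's Corollary \ref{coro:min-max-dependent}) is the right tool, and it is precisely what makes the profile case painless at the EVT level. Finally, ``no essentially new input'' for the covariance-profile case is an overstatement on the matrix side: after the Hermitian decomposition the two symmetric parts carry a variance profile rather than being Wigner matrices, so the almost-sure spectral norm bound needs an extra argument (Lata\l a's theorem plus Gaussian concentration in the paper's appendix), in addition to the Isserlis step for the means.
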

\begin{figure}[h!]
  \centering
  \centerline{\includegraphics[width=8.5cm]{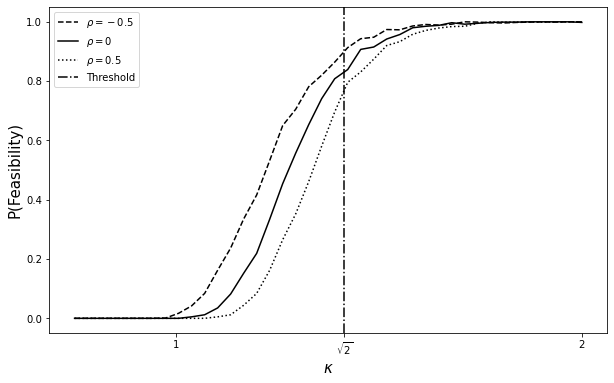}}
\caption{Transition towards feasibility for the elliptic model \eqref{eq:elliptic-model}. For each $\kappa$ on the $x$-axis, we simulate $1000$ matrices $B_n$ of size $n=1000$ and compute the solution $x_n$ of Theorem \ref{th:feasibility} at the scaling $\alpha_n(\kappa) = \kappa \sqrt{\log(n)}$. Each curve represents the proportion of feasible solutions $x_n$ obtained for the $1000$ simulations. Three distinct values $\rho \in \{-0.5 ,\ 0 ,\ 0.5\}$ are used. The dot-dashed vertical line corresponds to $\kappa=\sqrt{2}$ i.e. the critical scaling $\alpha_n^* = \sqrt{2}\sqrt{\log(n)}$. }
\label{fig:transition}
\end{figure}

Proof of Theorem \ref{th:feasibility} is established in Section \ref{sec:proof-feasibility} under the assumption that $B_n$ follows the elliptic model. The adaptations needed to cover the covariance profile case are provided in Appendix \ref{app:proof-cov-profile}.


\subsection{No feasibility but a unique stable equilibrium.}

Aside from the question of feasibility arises the question of \textit{stability}: for a complex system, how likely a perturbation of the solution $\x_n$ at equilibrium  will return to the equilibrium? Gardner and Ashby \cite{gardner1970connectance} considered stability issues of complex systems connected at random. Based on the circular law for large random matrices with i.i.d. entries, May \cite{may1972will} provided a complexity/stability criterion and motivated the systematic use of large random matrix theory in the study of foodwebs, see for instance  Allesina et al. \cite{Allesina:2015ux}. Recently, Stone \cite{stone2018feasibility} and Gibbs et al. \cite{gibbs2018effect} revisited the relation between feasibility and stability. 

For a generic LV system
\begin{equation}\label{eq:generic-LV}
\frac{d\, y_k(t)}{dt} =y_k(r_k +(C\bs{y})_k)\, ,\quad k\in [n]\, ,
\end{equation}
Takeuchi and Adachi provide a criterion for the existence of a unique equilibrium $\bs{y}^*$ and the global stability of LV systems, see Theorem 3.2.1 in \cite{takeuchi1996global}.
\begin{theo}[Takeuchi and Adachi 1980] \label{th:takeuchi} If there exists a positive diagonal matrix $\Delta$ such that $\Delta C + C^\tran \Delta$ is negative definite, there is a unique non-negative equilibrium $\bs{y}^*$ to \eqref{eq:generic-LV}, which is globally stable:
$$
\forall \bs{y}_0>0\,,\quad \begin{cases}
\bs{y}(0)=\bs{y}_0\\
\bs{y}(t)\ \text{satisfies}\ \eqref{eq:generic-LV}
\end{cases}\ ,\quad \bs{y}(t)\xrightarrow[t\to\infty]{} \bs{y}^*\, .
$$ 
\end{theo}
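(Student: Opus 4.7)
The plan is to combine a linear complementarity argument for existence and uniqueness of $\bs{y}^*$ with a Volterra-type Lyapunov function to obtain global convergence.

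First, I would reformulate the equilibrium condition as a linear complementarity problem (LCP): find $\bs{y}^* \geq 0$ and $\bs{w}^* := -\bs{r} - C\bs{y}^* \geq 0$ with $y_k^* w_k^* = 0$ for each $k \in [n]$. The assumption that $\Delta C + C^\tran \Delta$ is negative definite passes to every principal submatrix, hence each $\Delta_{II} C_{II}$ is negative stable, is invertible, and a sign computation on its determinant yields $\det(-C_{II}) > 0$ for every non-empty $I \subseteq [n]$. In other words, $-C$ is a P-matrix, and the classical Samelson--Thrall--Wesler theorem guarantees that the LCP admits a unique solution. This delivers the unique non-negative equilibrium $\bs{y}^*$.

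Next, set $I = \{k : y_k^* > 0\}$, $J = \{k : y_k^* = 0\}$, write $\Delta = \mathrm{diag}(\delta_k)$ with $\delta_k > 0$, and consider the Volterra-type Lyapunov function
$$
V(\bs{y}) = \sum_{k \in I} \delta_k \left( y_k - y_k^* - y_k^* \log \frac{y_k}{y_k^*} \right) + \sum_{k \in J} \delta_k y_k,
$$
defined on the open positive orthant. The function $V$ is non-negative, vanishes only at $\bs{y}^*$, and blows up as any coordinate $y_k$ with $k \in I$ approaches $0$. Differentiating along a trajectory of \eqref{eq:generic-LV}, using $r_k + (C\bs{y}^*)_k = 0$ for $k \in I$ and the complementarity inequality $r_k + (C\bs{y}^*)_k \leq 0$ for $k \in J$, and setting $\bs{z} = \bs{y} - \bs{y}^*$ (so that $z_k = y_k \geq 0$ for $k \in J$), a short computation gives
$$
\dot V \;\leq\; \bs{z}^\tran \Delta C \,\bs{z} \;=\; \tfrac{1}{2}\, \bs{z}^\tran (\Delta C + C^\tran \Delta)\, \bs{z} \;\leq\; 0,
$$
with equality only at $\bs{z} = 0$. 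LaSalle's invariance principle then forces $\bs{y}(t) \to \bs{y}^*$ from any initial condition $\bs{y}(0) > 0$.

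The main obstacle is the careful treatment of the boundary components $k \in J$ when computing $\dot V$: the term $y_k(r_k + (C\bs{y})_k)$ must be controlled via $y_k\bigl(r_k + (C\bs{y}^*)_k\bigr) \leq 0$ (which uses both $y_k \geq 0$ and the LCP inequality) in order to absorb everything into the quadratic form $\bs{z}^\tran \Delta C \bs{z}$. A related technical point is that $V$ is singular on the face $\{y_k = 0\}$ for $k \in I$; this is circumvented by observing that $V(\bs{y}(t))$ is non-increasing along trajectories, so $\bs{y}(t)$ stays in a compact subset of the positive orthant on which the standard form of LaSalle's principle applies.
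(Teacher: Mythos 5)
This theorem is not proved in the paper: it is quoted from Takeuchi's book (Theorem 3.2.1 in \cite{takeuchi1996global}) and used as a black box, so there is no internal proof to compare against. Your argument is, in outline, the classical Takeuchi--Adachi proof: negative definiteness of $\Delta C + C^\tran\Delta$ passes to principal submatrices, forces every $C_{II}$ to be stable and hence $\det(-C_{II})>0$, so $-C$ is a P-matrix and the Samelson--Thrall--Wesler theorem gives a unique LCP solution, i.e. the unique saturated equilibrium; the split Volterra function $V$ together with the complementarity inequalities for $k\in J$ yields $\dot V\le \bs{z}^\tran\Delta C\,\bs{z}=\tfrac12\bs{z}^\tran(\Delta C+C^\tran\Delta)\bs{z}<0$ off $\bs{y}^*$, and convergence follows. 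Two points of precision are worth fixing. First, the uniqueness being proved is that of the saturated (hence globally stable) non-negative equilibrium: the system always has other non-negative equilibria (e.g. $\bs{y}=\bs{0}$ and equilibria supported on sub-faces), so your LCP formulation is the correct reading of the statement. Second, when $y^*_k=0$ for some $k\in J$ the trajectory does \emph{not} remain in a compact subset of the \emph{open} positive orthant (it must approach the face $\{y_k=0\}$); the correct positively invariant compact set is the sublevel set $\{V\le V(\bs{y}_0)\}$ intersected with the closed orthant, on which the coordinates indexed by $I$ are bounded away from zero, $V$ is still $C^1$ (the logarithms only involve $k\in I$), and LaSalle, or directly the strict inequality $\dot V<0$ for $\bs{y}\neq\bs{y}^*$, applies there.
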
 
Combining this result (setting $I-B=-C$) with results from Random Matrix Theory, we can guarantee the existence of a globally stable equilibrium $\bs{x}^*$ of \eqref{eq:LV} for a wide range of parameters $(\rho,\alpha,\mu)$. Denote by
\begin{multline}
\label{eq:def-A}
\mathcal{A}= \bigg\{ (\rho,\alpha,\mu)\in (-1,1) \times (0,\infty) \times \mathbb{R}\,,\\
\ \alpha > \sqrt{2(1+\rho)},\quad \mu< \frac{1}{2}+\frac 12 \sqrt{1-\frac{2(1+\rho)}{\alpha^2}} \bigg \}
\end{multline}
the set of admissible parameters.

\begin{figure}[h!]
  \centering
  \centerline{\includegraphics[width=8.5cm]{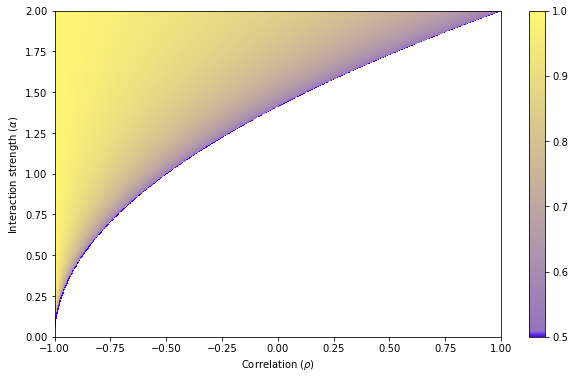}}
   \caption{Representation of the set of admissible parameters $\mathcal{A}$ by a heat map. The set $\mathcal{A}$ given by \eqref{eq:def-A} yields the existence of a unique (random) globally stable equilibrium $\bs{x}^*$. The $x$-axis corresponds to $\rho$, the $y$-axis to $\sigma$ and the intensity of the color $\mu$.}  
\label{fig:admissible}
\end{figure}

\begin{prop}\label{prop:unique-equilibrium} Let $(\rho,\alpha,\mu) \in \mathcal{A}$,
then almost surely, matrix $$(I-B)+(I-B)^\tran$$ is eventually positive definite: with probability one, for a given realization $\omega$, there exists $N(\omega)$ such that for $n\ge N(\omega)$, $(I-B^\omega)+(I-B^\omega)^\tran$ is positive definite. In particular, there exists a unique globally stable non-negative equilibrium $\bs{x}^*$.
\end{prop}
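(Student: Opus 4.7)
The plan is to decompose $(I-B) + (I-B)^\tran$ as a Wigner-like symmetric matrix plus a deterministic rank-one perturbation, then to show that the spectrum of this combination stays strictly below $2$ using the edge of the semicircle law together with a BBP-type outlier result. Once $(I-B) + (I-B)^\tran$ is shown to be positive definite, the existence and global stability of the nonnegative equilibrium follow from Theorem \ref{th:takeuchi} applied with $C = B - I$ and $\Delta = I$, since then $\Delta C + C^\tran \Delta = -[(I-B) + (I-B)^\tran]$ is negative definite.

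Concretely, I would write
\begin{equation*}
(I-B) + (I-B)^\tran = 2 I - H_n - P_n, \qquad H_n := \frac{A_n + A_n^\tran}{\alpha \sqrt{n}}, \qquad P_n := \frac{2\mu}{n}\,\mathbf{1}_n \mathbf{1}_n^\tran.
\end{equation*}
The matrix $H_n$ is symmetric and its off-diagonal entries $(A_{ij} + A_{ji})/(\alpha\sqrt{n})$ are centered Gaussian with variance $\sigma_\rho^2/n$ where $\sigma_\rho^2 := 2(1+\rho)/\alpha^2$; the diagonal entries $2 A_{ii}/(\alpha\sqrt{n})$ are of order $n^{-1/2}$ and do not affect edge asymptotics. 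Wigner's theorem therefore yields $\|H_n\| \to 2\sigma_\rho$ almost surely, and the condition $\alpha > \sqrt{2(1+\rho)}$ in $\mathcal{A}$ is precisely $2\sigma_\rho < 2$.

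Next, $P_n = 2\mu\, v_n v_n^\tran$ with $v_n := \mathbf{1}_n/\sqrt{n}$ a deterministic unit vector, so $P_n$ is a rank-one symmetric perturbation of $H_n$ with spike strength $2\mu$. The BBP phase transition for additive spiked Wigner matrices (classical in the Gaussian symmetric case, where orthogonal invariance reduces to a spiked GOE) gives
\begin{equation*}
\lambda_{\max}(H_n + P_n) \xrightarrow[n\to\infty]{\text{a.s.}} \begin{cases} 2\sigma_\rho & \text{if } 2\mu \le \sigma_\rho, \\[1mm] 2\mu + \dfrac{\sigma_\rho^2}{2\mu} & \text{if } 2\mu > \sigma_\rho. \end{cases}
\end{equation*}
In the subcritical regime, strict positivity of $2I - (H_n + P_n)$ follows directly from $2\sigma_\rho < 2$. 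In the supercritical regime, requiring $2\mu + \sigma_\rho^2/(2\mu) < 2$ is equivalent to the quadratic inequality $2\mu^2 - 2\mu + (1+\rho)/\alpha^2 < 0$, which yields $\mu < \tfrac{1}{2} + \tfrac{1}{2}\sqrt{1 - 2(1+\rho)/\alpha^2}$, the second defining inequality of $\mathcal{A}$. Combining both regimes, $(\rho,\alpha,\mu) \in \mathcal{A}$ gives $\limsup_n \lambda_{\max}(H_n + P_n) < 2$ on a probability-one event, hence $(I-B) + (I-B)^\tran$ is positive definite for all sufficiently large $n$ almost surely.

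The main obstacle is the BBP step: one must invoke a version of the spiked Wigner result that is almost sure (not merely in probability) and that applies to a rank-one perturbation along the fully delocalized deterministic vector $\mathbf{1}_n/\sqrt{n}$, while checking that the slightly heavier diagonal of $H_n$ does not shift the edge. Both points are harmless in the Gaussian setting: the law of $H_n$ is invariant under orthogonal conjugation, so one may rotate $v_n$ onto the first basis vector and reduce to the textbook spiked GOE asymptotics, and the diagonal perturbation can be removed by a standard rank-$n$, operator-norm $O(n^{-1/2})$ estimate. Once positive definiteness of $(I-B) + (I-B)^\tran$ is established, the statement on $\bs{x}^*$ follows from Theorem \ref{th:takeuchi}.
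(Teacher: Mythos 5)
Your proposal is correct and follows essentially the same route as the paper: the same decomposition $2I - \frac{A+A^\tran}{\alpha\sqrt{n}} - \frac{2\mu}{n}\mathbf{1}\mathbf{1}^\tran$, the semicircle edge $2\sqrt{2(1+\rho)}/\alpha$ for the symmetrized part, an almost-sure rank-one outlier result (the paper cites Capitaine--Donati-Martin--F\'eral where you invoke a spiked-GOE/BBP statement, with the same threshold and outlier location $2\mu + \frac{1+\rho}{\alpha^2\mu}$), the same quadratic inequality characterizing $\mu^{\pm}$, and the same conclusion via Theorem \ref{th:takeuchi} with $C=B-I$, $\Delta=I$. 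The only cosmetic differences are the choice of reference for the deformed-Wigner step and your explicit handling of the non-GOE diagonal, both of which are harmless.
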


Proof of Proposition \ref{prop:unique-equilibrium} is provided in Section \ref{sec:proof-stability}.

\begin{figure}[htb]
\centering
\begin{subfigure}[b]{.47\textwidth}
    \includegraphics[width=\textwidth]{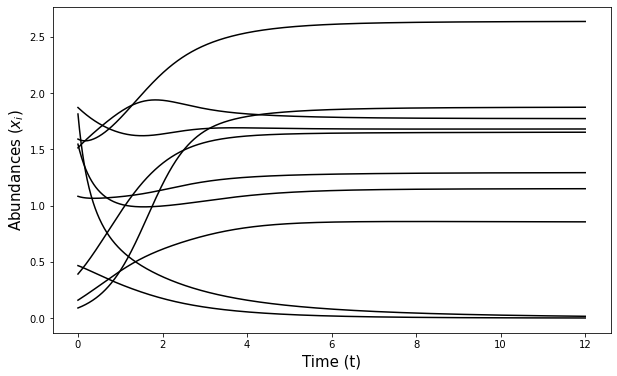}
    \caption{Initial conditions drawn in $(0,2)$,}
    \label{subfig:DL-Diff}
  \end{subfigure}%
  \hfill   
  \begin{subfigure}[b]{0.47\textwidth}
    \includegraphics[width=\textwidth]{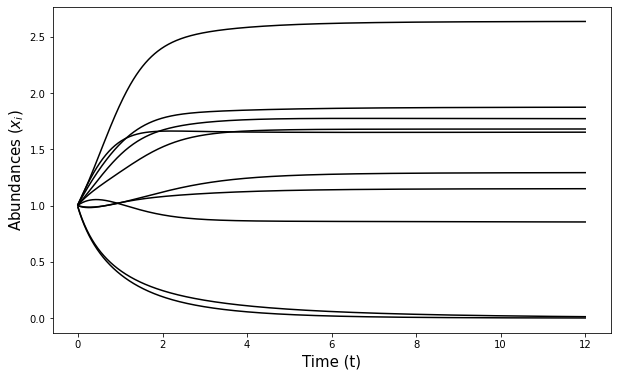}
    \caption{Initial conditions equal to 1.}
    \label{subfig:DL-Same}
  \end{subfigure}%
\caption{Representation of the dynamics of a ten-species system. For a fixed matrix of interactions $B_{10}$ with parameters $(\rho=0,\alpha=2,\mu=0)\in {\mathcal A}$, we consider two distinct initial conditions. Simulations show that the abundances converge in both cases toward the unique globally stable equilibrium $x^*$ predicted by Proposition \ref{prop:unique-equilibrium}. Notice that since $\alpha<\sqrt{2\log(10)}\simeq 2.14$, we witness vanishing species.} 
\label{fig:dynamics-LV}
\end{figure}

\subsection{Estimating the number of surviving species: Towards Bunin and Galla's equations.}
\
After giving conditions for the realization of a feasible equilibrium and investigating the existence and uniqueness of a stable sub-population (i.e some species vanish), we address the question of estimating the proportion of surviving species as a function of the model paramaters $(\rho,\alpha,  \mu)$.

To our knowledge, this question has not received yet an answer at a mathematical level of rigor and remains open. However theoretical physicists/ecologists provided a solution to this problem supported by simulations. Tools from physics to study population dynamics in the context of Lotka-Volterra equations were first introduced by Opper et al. \cite{diederich_replicators_1989,opper_phase_1992}. 
In 2017, Bunin \cite{bunin2017ecological} precisely answers the question of estimating the proportion of surviving species for the model under investigation (non-centered elliptic model $B$). He uses the dynamical cavity method (a review of which can be found in \cite{barbier_cavity_2017}). The key concept consists of assuming that a unique fixed point exists and introducing a new species with new interactions in the existing system. Provided the coherence of the assumption, an analogy between the properties of the solutions with $n$ and $n+1$ species yields closed-form equations that we present hereafter. 

Notice that recently, similar equations were obtained by Galla \cite{galla_dynamically_2018} using generating functional techniques.

The system of equations presented hereafter is a version of Bunin's equations without the carrying capacity. It is similar to the equations obtained by the replicator equations \cite{diederich_replicators_1989,opper_phase_1992}. Notice that we mention but do not discuss the many implicit assumptions yielding the system of equations.

Let $(\rho,\alpha,  \mu)\in {\mathcal A}$ and $\bs{x}^*$ given by Proposition \ref{prop:unique-equilibrium}. We first introduce the following quantities:
\begin{equation}
    \phi = \frac{\textrm{Card}\{x^*_i>0, i \in [n] \}}{n}\ ,\quad 
    \left \langle \bs{x} \right \rangle = \frac{1}{n}\sum_{j=1}^n x^*_j \ , \quad \left \langle \bs{x}^2 \right \rangle = \frac{1}{n}\sum_{j=1}^n (x_j^*)^2 \ .
\end{equation}
Denote by $Z \sim \mathcal{N}(0,1)$ and set
$$\Delta = (1 +\left \langle \bs{x} \right \rangle  \mu)\frac{\alpha}{\sqrt{\left \langle \bs{x}^2 \right \rangle}}\, .
$$
The following system of 4 equations has 4 unknowns, among which the (supposedly existing) asymptotic limits of $\phi,\left \langle \bs{x} \right \rangle,\left \langle \bs{x}^2 \right \rangle$, denoted (by abuse of notations) by the same notations. The fourth unknown $v$ is a parameter essentially related to the dynamical cavity method. 
This system is supposed to admit a unique solution:
\begin{eqnarray}
 \phi &=& \frac{1}{\sqrt{2\pi}}\int_{-\Delta}^{+\infty}e^{\frac{-z^2}{2}}dz \label{eq:sys_eq_1} \\
 \left \langle \bs{x} \right \rangle &=& \frac{\phi}{1-\frac{\rho v }{\alpha}} \left((1 +\left \langle \bs{x} \right \rangle  \mu)+\frac{\sqrt{\left \langle \bs{x}^2 \right \rangle}}{\alpha}\mathbb{E}(Z|Z>-\Delta) \right) \label{eq:sys_eq_2}\\
\left \langle \bs{x}^2 \right \rangle  &=& \left(\frac{\sqrt{\phi}}{1-\frac{\rho v }{\alpha}} \right)^2 \bigg( \left( 1 +\left \langle \bs{x} \right \rangle  \mu\right)^2 
+2(1 +\left \langle \bs{x} \right \rangle  \mu)\frac{\sqrt{\left \langle \bs{x}^2 \right \rangle}}{\alpha} \mathbb{E}(Z|Z>-\Delta)\nonumber\\
&&\qquad  \ + \ \frac{\left \langle \bs{x}^2 \right \rangle}{\alpha^2} \mathbb{E}(Z^2|Z>-\Delta) \bigg) \label{eq:sys_eq_3}\\
 v &=& \phi \left(\frac{1}{\alpha-\rho v}\right)
\label{eq:sys_eq_4}
\end{eqnarray}
The theoretical solutions of system \eqref{eq:sys_eq_1}-\eqref{eq:sys_eq_4} are compared with the empirical values obtained by Monte-Carlo experiments. As shown in Fig. \ref{fig:theo_vs_emp}, the matching is remarkable.
\begin{figure}[h!]
\centering
\begin{subfigure}[b]{.32\textwidth}
    \includegraphics[width=\textwidth]{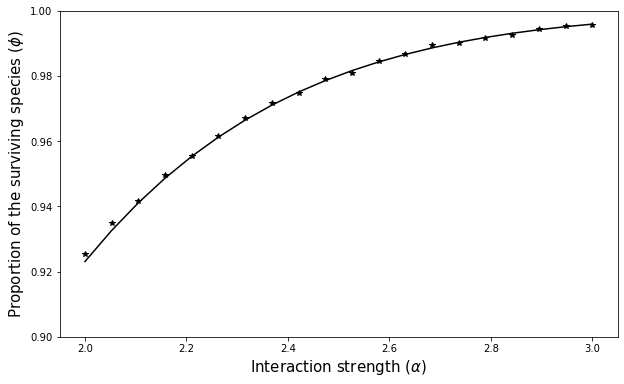}
    \caption{$\phi$ versus $\alpha$,}
    \label{subfig:theo_vs_empi_prop}
  \end{subfigure}%
  \hfill   
  \begin{subfigure}[b]{0.32\textwidth}
    \includegraphics[width=\textwidth]{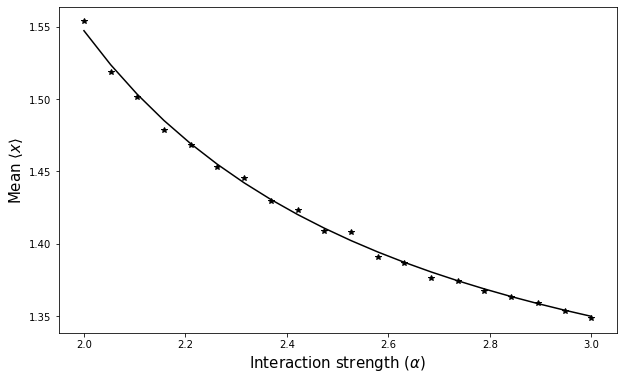}
    \caption{$\left \langle x \right \rangle$ versus $\alpha$,}
    \label{subfig:theo_vs_emp_mean}
  \end{subfigure}%
\hfill   
  \begin{subfigure}[b]{0.32\textwidth}
    \includegraphics[width=\textwidth]{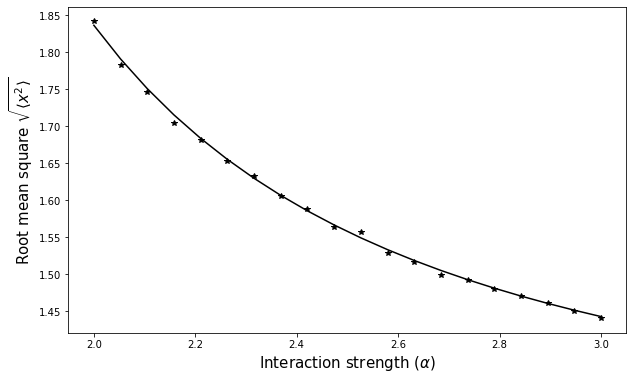}
    \caption{$\left \langle x^2 \right \rangle$ versus $\alpha$.}
    \label{subfig:theo_vs_emp_root}
  \end{subfigure}%
\caption{Theoretical values of $\phi$, $\langle \bs{x}\rangle$ and $\langle \bs{x}^2\rangle$ (solid line) obtained by solving the system \eqref{eq:sys_eq_1}-\eqref{eq:sys_eq_4} given the parameters ($\mu = 0.2,\rho = 0.5$), compared to the empirical values (dots) obtained by Monte-Carlo simulations (size of matrix $n=500$, number of random samples $P=200$). The $x$-axis corresponds to the interaction strength $\alpha$.}
\label{fig:theo_vs_emp}
\end{figure}

The impact of the correlation $\rho$ on the proportion of the surviving species is shown in Figure \ref{fig:cor_impact}.
\begin{remarque}
From a theoretical ecology point of view, notice that a negative correlation (prey-predator) seems to slow down the decline of the surviving species, whereas a positive correlation (mutualism and competition) reverses the trend. 
These types of results are similar to Allesina and Tang \cite{allesina2012stability} where they notice that prey-predator interactions seem to stabilize the system. 
\end{remarque}

\begin{figure}[h!]
  \centering
  \centerline{\includegraphics[width=9.5cm]{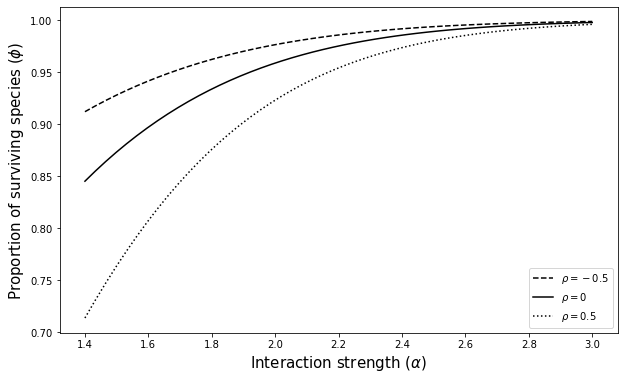}}
   \caption{Effect of the correlation $\rho$ and the interaction strength $\alpha$ on the proportion of surviving species $\phi$. Each curve is plotted by resolving the system \eqref{eq:sys_eq_1}-\eqref{eq:sys_eq_4} in the centered case $\mu = 0$.
   }  
\label{fig:cor_impact}
\end{figure}

\section{Feasibility: Proof of Theorem \ref{th:feasibility}}
\label{sec:proof-feasibility}
We assume that matrix $B_n$ is given by \eqref{eq:elliptic-model} (elliptic model). The case where matrix $B_n$ is 
given by \eqref{eq:cov-model} (covariance profile model) needs extra arguments which are provided in Appendix \ref{app:proof-cov-profile}.
\subsection{Preliminary results}\label{sec:preliminary}
\subsubsection*{Extreme Value Theory (EVT) and the Normal Comparison Lemma}
Let $\left( Z_k \right)_{k\in \mathbb{N}}$ be a sequence of i.i.d. ${\mathcal N}(0,1)$ random variables and denote:
\begin{equation}\label{def:beta}
\begin{cases}M_n=\max_{k\in [n]} Z_k\\ 
\widecheck M_n =\min_{k\in[n]} Z_k
\end{cases}, \quad 
\alpha^*_n= \sqrt{2\log n}\ ,\quad \beta^*_n = \alpha^*_n - \frac 1{2\alpha^*_n}  \log ( 4\pi \log n)\ .
\end{equation}
Let $G(x)=e^{-e^{-x}}$ be the Gumbel cumulative distribution function, then classical EVT results (see for instance \cite[Theorem 1.5.3]{leadbetter2012extremes}) yield that for every $x\in \mathbb{R}$, 
\begin{equation}\label{eq:Gumbel}
    \mathbb{P} \left\{ \alpha^*_n( M_n-\beta^*_n) \le  x \right\} \xrightarrow[n\to\infty]{} G(x)\,,\quad 
\mathbb{P} \left\{ \alpha^*_n(\widecheck M_n +\beta^*_n) \ge -x \right\} \xrightarrow[n\to\infty]{} G(x)\, .
\end{equation}
We consider the following dependent framework: Let $({\mathcal Z}_{k,n})_{k\in [n]}$ be a Gaussian vector whose components are ${\mathcal N}(0,1)$ with covariance
$$
\textrm{cov}\left( {\mathcal Z}_{k,n} ; {\mathcal Z}_{\ell,n}\right) = \frac {\rho}n\,,\quad |\rho|\le 1\,,\quad k\neq \ell\, .
$$
We are interested in the behaviour of 
$
{\mathcal M}_n=\max_{k\in [n]} {\mathcal Z}_{k,n}$ and $
\widecheck {\mathcal M}_n=\min_{k\in [n]} {\mathcal Z}_{k,n}\ ,
$
and shall prove the counterpart of \eqref{eq:Gumbel} with the help of the Normal Comparison Lemma (NCL):
\begin{theo}[Theorem 4.2.1, \cite{leadbetter2012extremes}]
\label{NCL}
Suppose that $(\xi_i,\, i\in [n])$ is a gaussian vector where the $\xi_i$'s are standard normal variables, with covariance matrix $\Lambda^1 = \left(\Lambda_{ij}^1\right)$. Similarly, let $(\eta_i,\, i\in [n])$ be a gaussian vector where the $\eta_i$'s are standard normal, with covariance matrix $\Lambda^0 = \left(\Lambda_{ij}^0\right)$. Denote by $\rho_{ij} = \max\left\{|\Lambda_{ij}^0|,|\Lambda_{ij}^1|\right\}$ and let $(u_i,\, i\in[n])$ be real numbers. Then:

\begin{multline}
    \label{Normal cl}
            \left|P\left\{\xi_j \leq u_j\,,\ j\in [n] \right\} - P\left\{\eta_j \leq u_j\,,\ j\in [n]\right\} \right| \\
            \leq \frac{1}{2\pi}\sum_{1\leq i < j \leq n} \left|\Lambda_{ij}^1 - \Lambda_{ij}^0\right| \left( 1 - \rho_{ij}^2\right)^{-1/2} \exp\left(-\frac{\frac{1}{2}(u_i^2 + u_j^2)}{1+\rho_{ij}}\right)\ .
\end{multline}
\end{theo}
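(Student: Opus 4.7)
The natural approach is Slepian-type interpolation between the two covariance structures. For $h\in[0,1]$, let $\Lambda^h:=h\Lambda^1+(1-h)\Lambda^0$; since both endpoints are valid covariance matrices with unit diagonal, so is $\Lambda^h$. Let $\xi^{(h)}$ denote the centered Gaussian vector with covariance $\Lambda^h$ and density $\phi_h$, and set
$$
F(h)=\mathbb{P}\left\{\xi^{(h)}_j\le u_j,\ j\in[n]\right\}=\int_{-\infty}^{u_1}\!\!\cdots\!\int_{-\infty}^{u_n}\phi_h(x)\,dx.
$$
Since $\mathbb{P}\{\xi_j\le u_j,\,j\in[n]\}-\mathbb{P}\{\eta_j\le u_j,\,j\in[n]\}=F(1)-F(0)=\int_0^1 F'(h)\,dh$, the whole problem reduces to controlling $F'(h)$ pointwise and integrating.

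The central tool is the Gaussian heat identity $\partial\phi_\Lambda/\partial\Lambda_{ij}=\partial^2\phi_\Lambda/\partial x_i\,\partial x_j$ for $i\neq j$, which follows directly from the Fourier representation of $\phi_\Lambda$. Differentiating under the integral and applying the chain rule,
$$
F'(h)=\sum_{i<j}(\Lambda^1_{ij}-\Lambda^0_{ij})\int_{-\infty}^{u_1}\!\!\cdots\!\int_{-\infty}^{u_n}\frac{\partial^2\phi_h}{\partial x_i\,\partial x_j}\,dx.
$$
Carrying out the $x_i$ and $x_j$ integrations explicitly (the antiderivatives at $-\infty$ vanish) yields the standard representation
$$
F'(h)=\sum_{i<j}(\Lambda^1_{ij}-\Lambda^0_{ij})\,\phi^{(h)}_{ij}(u_i,u_j)\,\mathbb{P}\!\left\{\xi^{(h)}_k\le u_k\,,\ k\neq i,j\ \bigm|\ \xi^{(h)}_i=u_i,\ \xi^{(h)}_j=u_j\right\},
$$
where $\phi^{(h)}_{ij}$ is the bivariate marginal density of $(\xi^{(h)}_i,\xi^{(h)}_j)$.

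To finish, I would bound the conditional probability by $1$ and bound the bivariate density pointwise. Since $(\xi^{(h)}_i,\xi^{(h)}_j)$ is standard bivariate with correlation $\Lambda^h_{ij}$ satisfying $|\Lambda^h_{ij}|\le \max(|\Lambda^0_{ij}|,|\Lambda^1_{ij}|)=\rho_{ij}$,
$$
\phi^{(h)}_{ij}(u_i,u_j)=\frac{1}{2\pi\sqrt{1-(\Lambda^h_{ij})^2}}\exp\!\left(-\frac{u_i^2-2\Lambda^h_{ij}u_iu_j+u_j^2}{2\bigl(1-(\Lambda^h_{ij})^2\bigr)}\right)\le\frac{\exp\!\left(-\frac{u_i^2+u_j^2}{2(1+\rho_{ij})}\right)}{2\pi\sqrt{1-\rho_{ij}^2}},
$$
using the elementary inequality $u_i^2-2ru_iu_j+u_j^2\ge(1-|r|)(u_i^2+u_j^2)$ valid for $|r|\le 1$, together with monotonicity of the resulting bound in $|r|$. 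Integrating $|F'(h)|$ over $h\in[0,1]$ and using $\int_0^1 |\Lambda^1_{ij}-\Lambda^0_{ij}|\,dh = |\Lambda^1_{ij}-\Lambda^0_{ij}|$ delivers the announced estimate, with the $1/(2\pi)$ prefactor.

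The main obstacle is the rigorous justification of the heat identity together with the iterated integration by parts inside the orthant: smoothness of $\phi_h$, convexity of the integration region and Gaussian tail decay make this routine but tedious, and some care is needed in the degenerate regime where $|\Lambda^h_{ij}|$ approaches $1$ — a regime which is precisely what the factor $(1-\rho_{ij}^2)^{-1/2}$ in the conclusion is designed to accommodate. One should also check that the interpolation preserves positive semidefiniteness (immediate from convexity of the PSD cone), so that every $\Lambda^h$ genuinely defines a Gaussian law.
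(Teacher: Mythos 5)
Your interpolation argument is correct and is essentially the classical proof of this result: the paper does not prove the lemma at all but quotes it as Theorem~4.2.1 of Leadbetter, Lindgren and Rootz\'en, and your Slepian-type interpolation, Plackett heat identity $\partial\phi_\Lambda/\partial\Lambda_{ij}=\partial^2\phi_\Lambda/\partial x_i\partial x_j$, bound of the conditional probability by $1$, and the pointwise bivariate density estimate via $u_i^2-2ru_iu_j+u_j^2\ge(1-|r|)(u_i^2+u_j^2)$ reproduce exactly the argument of that reference, including the $\tfrac{1}{2\pi}(1-\rho_{ij}^2)^{-1/2}$ factor. The technical caveats you flag (justifying differentiation under the integral, the degenerate case) are handled in the cited text by the same routine continuity/nondegeneracy considerations, so nothing essential is missing.
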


\begin{cor}\label{coro:min-max-dependent}
Recall the definition of $\left( {\mathcal Z}_{k,\ell}\right)_{k\in [n]}$, ${\mathcal M}_n$ and $\widecheck{\mathcal M}_n$ above, then
\begin{equation}\label{eq:Gumbel-DEPENDENT}
    \mathbb{P} \left\{ \alpha^*_n( {\mathcal M}_n-\beta^*_n) \le  x \right\} \xrightarrow[n\to\infty]{} G(x)\,,\quad
\mathbb{P} \left\{ \alpha^*_n(\widecheck {\mathcal M}_n +\beta^*_n) \ge -x \right\} \xrightarrow[n\to\infty]{} G(x)\, .
\end{equation}
\end{cor}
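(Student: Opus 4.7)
The plan is to compare the joint distribution of $(\mathcal{Z}_{k,n})_{k\in[n]}$ with that of i.i.d.\ standard Gaussians via the Normal Comparison Lemma (Theorem \ref{NCL}), thereby reducing \eqref{eq:Gumbel-DEPENDENT} to the i.i.d.\ statement \eqref{eq:Gumbel}. Concretely, I would take $\xi_i = \mathcal{Z}_{i,n}$ with covariance $\Lambda^1_{ij}= \rho/n$ ($i \neq j$) and $\Lambda^1_{ii}=1$, and $\eta_i = Z_i$ with $\Lambda^0 = I_n$, and apply the lemma with constant thresholds $u_i = u_n := \beta_n^* + x/\alpha_n^*$. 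If the right-hand side of \eqref{Normal cl} tends to zero, then $\mathbb{P}\{\mathcal{M}_n \le u_n\} - \mathbb{P}\{M_n \le u_n\} \to 0$ and the convergence of the maximum in \eqref{eq:Gumbel-DEPENDENT} follows at once from the first part of \eqref{eq:Gumbel}.

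The main work is therefore the error estimate. One has $|\Lambda^1_{ij}-\Lambda^0_{ij}| = |\rho|/n$ and $\rho_{ij} = |\rho|/n$ for $i \neq j$, so the factor $(1-\rho_{ij}^2)^{-1/2}$ is bounded uniformly in $i,j,n$. The delicate point is the exponential factor. Using \eqref{def:beta} one checks that $u_n^2 = 2\log n - \log(4\pi \log n) + 2x + o(1)$, and since $\rho_{ij} \le 1/n$ a routine expansion gives
\[
\exp\!\left(-\frac{u_n^2}{1+\rho_{ij}}\right) \;=\; \exp(-u_n^2)\,(1+o(1)) \;=\; O\!\left(\frac{\log n}{n^2}\right)
\]
uniformly in $i<j$. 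Summing over the $\binom{n}{2}$ pairs bounds the right-hand side of \eqref{Normal cl} by $O(n^2)\cdot(|\rho|/n)\cdot O(\log n / n^2) = O(\log n / n) \to 0$, which proves the convergence statement for $\mathcal{M}_n$.

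For the minimum, the argument is essentially free: the centered Gaussian vector $(-\mathcal{Z}_{k,n})_{k\in[n]}$ has the same mean and covariance as $(\mathcal{Z}_{k,n})_{k\in[n]}$, so $\widecheck{\mathcal{M}}_n = -\max_{k}(-\mathcal{Z}_{k,n})$ has the same law as $-\mathcal{M}_n$, and the second convergence in \eqref{eq:Gumbel-DEPENDENT} reduces to the first. The only genuine obstacle in the whole argument is the exponential estimate above: the factor $\exp(-u_n^2/(1+\rho_{ij}))$ must come out of order exactly $(\log n)/n^2$ in order to absorb the quadratic number of pairs, which is precisely what the logarithmic correction in the definition of $\beta_n^*$ delivers.
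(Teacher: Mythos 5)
Your proof is correct and follows essentially the same route as the paper: both apply the Normal Comparison Lemma to compare $(\mathcal{Z}_{k,n})$ with the i.i.d.\ vector at the thresholds $u_n(x)=\beta_n^*+x/\alpha_n^*$ and check that the error term vanishes, the only difference being that you track the $\log(4\pi\log n)$ correction to get the sharper bound $O(\log n/n)$ where the paper settles for the cruder $u_n\ge\kappa\alpha_n^*$ estimate, and you dispatch the minimum by the symmetry $\widecheck{\mathcal{M}}_n\overset{d}{=}-\mathcal{M}_n$ where the paper simply says the minimum is handled similarly. Both variants are valid, so nothing needs to change.
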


\begin{proof}
We apply the NCL to $(Z_k)_{k\in [n]}$ and $\left({\mathcal Z}_{k,n}\right)_{k\in [n]}$. Let $\rho_{ij} =\frac {|\rho|} n$ and $u_n(x)=\frac x{\alpha_n^*} +\beta_n^*$, then
\begin{eqnarray*}
\lefteqn{\left| \mathbb{P} \{ \alpha^*_n ( M_n - \beta_n^*)\le x\} - \mathbb{P} \{ \alpha^*_n ( {\mathcal M}_n - \beta_n^*)\le x\}\right|\phantom{\bigg|}}\\
&=& \left| \mathbb{P}\{ Z_j \le u_n(x)\, , \ j\in [n]\} - \mathbb{P}\{ {\mathcal Z}_{j,n} \le u_n(x)\, , \ j\in [n]\}\right|\ ,\\
&\le& \frac 1{2\pi} \frac {n(n-1)}2 \frac{|\rho|}n \left( 1-\frac{\rho^2}{n^2}\right)^{-\frac 12} \exp\left( - \frac{u_n^2(x)}{1+\frac {|\rho|} n}\right)\quad \le\quad  K\, n \exp\left( - \frac{u_n^2(x)}{1+\frac 1n}\right)\, .
\end{eqnarray*}
Now eventually $u_n(x)=\alpha_n(1+o(1)) \ge \kappa \alpha_n$ for any $\kappa<1$ and eventually
$$
n\exp \left( - \frac{u_n^2(x)}{1+\frac 1n}\right) \quad \le \quad  n \exp \left( - \frac{2\kappa^2\log(n)}{1+\frac 1n} \right)\quad =\quad 
n^{-\left( \frac{2\kappa^2}{1+\rho/n} - 1\right)}\, .
$$
This last term goes to zero as $n\to \infty$ for a well-chosen $\kappa$ sufficiently close to one. This concludes the proof for ${\mathcal M}_n$. The proof for $\widecheck{\mathcal M}_n$ can be handled similarly with minor modifications.
\end{proof}

\subsubsection*{Random Matrix Theory} 
Let $B_n$ be given by model \eqref{eq:elliptic-model}.
\begin{lemme}\label{lem:spectral-norml-elliptic}
    Let $A_n$ a $n\times n$ matrix with i.i.d. ${\mathcal N}(0,1)$ entries for $i\le j$ and $(A_{ij}, A_{ji})$ a standard bivariate Gaussian vector with covariance $\rho$ for $i<j$, then the following estimate holds true: almost surely,
    $$
    \limsup_{n\to \infty} \left\| \frac{A_n}{\sqrt{n}}\right\| \quad \le\quad  \sqrt{2}\left( \sqrt{1+\rho} +\sqrt{1-\rho}\right)\quad \le \quad 2\sqrt{2}\, .
    $$
\end{lemme}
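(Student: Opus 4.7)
The plan is to split $A_n$ into its symmetric and anti-symmetric parts, treat each as a Wigner-type matrix with a known spectral edge, and combine the two bounds via the triangle inequality. First I would write $A_n = S_n + T_n$ with $S_n = \tfrac12(A_n + A_n^\tran)$ and $T_n = \tfrac12(A_n - A_n^\tran)$. A direct covariance computation using the joint Gaussian pair $(A_{ij},A_{ji})$ shows that for each $i<j$ the variables $S_{ij}$ and $T_{ij}$ are uncorrelated, hence independent, and
$$S_{ii}\sim\mathcal{N}(0,1),\quad S_{ij}\sim\mathcal{N}\bigl(0,\tfrac{1+\rho}{2}\bigr),\quad T_{ij}\sim\mathcal{N}\bigl(0,\tfrac{1-\rho}{2}\bigr)\quad (i<j),$$
with all upper-triangular entries within each matrix mutually independent.

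Next I would treat the two blocks separately. The matrix $S_n$ is a real symmetric Wigner matrix (its diagonal variance differs from the off-diagonal one, but this is immaterial at the edge), so Bai--Yin's theorem yields almost surely
$$\limsup_{n\to\infty}\frac{\|S_n\|}{\sqrt n}\ \le\ 2\sqrt{\tfrac{1+\rho}{2}}\ =\ \sqrt{2(1+\rho)}.$$
For $T_n$, the antisymmetry $T_n^\tran=-T_n$ makes $iT_n$ Hermitian with the same operator norm as $T_n$; its off-diagonal entries are independent purely imaginary Gaussians of variance $(1-\rho)/2$ and its diagonal is zero. The Wigner edge result in the Hermitian setting then gives almost surely
$$\limsup_{n\to\infty}\frac{\|T_n\|}{\sqrt n}\ \le\ \sqrt{2(1-\rho)}.$$

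Adding the two bounds via $\|A_n\|\le\|S_n\|+\|T_n\|$ gives the first claimed estimate $\sqrt{2}(\sqrt{1+\rho}+\sqrt{1-\rho})$, and the second follows from $(\sqrt{1+\rho}+\sqrt{1-\rho})^2 = 2+2\sqrt{1-\rho^2}\le 4$. The only real (and mild) obstacle is locating a clean off-the-shelf RMT statement for the antisymmetric block, whose non-zero entries are not independent across the diagonal since $T_{ji}=-T_{ij}$; passing to the Hermitian matrix $iT_n$ reduces this to a standard Hermitian Wigner edge problem, after which the almost-sure upper bound is classical. Everything else is routine.
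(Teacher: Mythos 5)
Your proof is correct and follows essentially the same route as the paper: the symmetric/antisymmetric split $A_n=S_n+T_n$ with $iT_n$ Hermitian is exactly the paper's decomposition of $A_n/\sqrt n$ into two Hermitian Wigner matrices with off-diagonal variances $(1\pm\rho)/2$, followed by the Wigner (Bai--Yin) edge bound and the triangle inequality. The independence check between $S_{ij}$ and $T_{ij}$ is harmless but unnecessary, since the triangle inequality does not require it.
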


\begin{proof} The proof relies on two arguments: the classical estimate of the asymptotic spectral norm of a Wigner matrix \cite[Th. 5.1]{book-bai-et-al-2010} and the following decomposition of matrix $A_n/\sqrt{n}$ as linear combination of Hermitian Wigner matrices:
\begin{equation}\label{eq:decomp-elliptic}
\frac{A_n}{\sqrt{n}} = \frac{A_n +A_n^{\tran}}{2\sqrt{n}} -\boldsymbol{i} \frac{\left[\boldsymbol{i}\left(A_n - A_n^{\tran}\right)\right]}{2\sqrt{n}}\,,\qquad (\bs{i}^2=-1)\, .
\end{equation}
Notice that both matrices $W^1_n=\frac{A_n +A_n^{\tran}}{2\sqrt{n}}$ and $W^2_n=\frac{\left[\boldsymbol{i}\left(A_n - A_n^{\tran}\right)\right]}{2\sqrt{n}}$ are Wigner matrices, with off-diagonal variances $(i<j)$:
$$
\mathrm{var}\left( \left[\frac{A_n +A_n^{\tran}}2\right]_{ij}\right) = \frac{1+\rho}2
\qquad \textrm{and}\qquad 
\mathrm{var}\left( \left[ \frac{\boldsymbol{i}\left(A_n - A_n^{\tran}\right)}2\right]_{ij} \right)=\frac {1-\rho}2\, .
$$
Hence,
$$
\limsup_n \left\|\frac{A_n}{\sqrt{n}}\right\| \le \limsup_n \| W^1_n\| +\limsup_n \| W_n^2\| =2 \left( \sqrt{\frac {1+\rho}2}+ \sqrt{\frac {1-\rho}2}\right)
$$
An elementary analysis yields $\sqrt{2} (\sqrt{1+\rho}+ \sqrt{1-\rho})\le 2\sqrt{2}$ for $|\rho|\le 1$. \end{proof}

\subsection{Proof of Theorem \ref{th:feasibility} - the centered case $\mu=0$}
\subsubsection*{Some preparation and strategy of proof}
We first prove Theorem \ref{th:feasibility} in the case where $\mu=0$ and focus on the equation
\begin{equation}\label{eq:equilibrium-centered}
\bs{x}_n = \mathbf{1}_n +\frac{A_n}{\alpha_n \sqrt{n}} \bs{x}_n\, .
\end{equation}
By Lemma \ref{lem:spectral-norml-elliptic}, $\limsup_n \| A_n/\sqrt{n}\|$ is a.s. bounded hence
$$
\left\| \frac{A_n}{\alpha_n \sqrt{n}}\right\| \xrightarrow[n\to\infty]{a.s.} 0\ .
$$
As a consequence, the resolvent $Q_n=\left( I_n - A_n/(\alpha_n \sqrt{n})\right)^{-1}$ is a.s. eventually well-defined
and the solution $\bs{x}_n=(x_k)_{k\in [n]}$ of \eqref{eq:equilibrium-centered} writes $\bs{x}_n = Q_n \mathbf{1}_n$.
Denote by $\bs{e}_k$ the $k$th canonical vector of $\mathbb{R}^n$. The following representation holds true (we shall often drop index $n$ in the following)
\begin{eqnarray}
 x_k&=& \bs{e}_k^\tran \bs{x} = \bs{e}_k Q \mathbf{1} =\sum_{\ell=0}^\infty \bs{e}_k^\tran \left( \frac{A}{\alpha\sqrt{n}} \right)^\ell \bs{1}\ ,\nonumber\\
 &=& 1+\frac 1{\alpha} \bs{e}_k^\tran\left( \frac{A}{\sqrt{n}}\right) \mathbf{1} +\frac 1{\alpha^2} 
 \bs{e}_k^\tran \left( \frac{A}{\sqrt{n}}\right)^2 Q\, \mathbf{1}\, . \label{eq:rep-solution}
\end{eqnarray}
Denote by 
\begin{equation}\label{eq:def-Z-R}
Z_{k,n} = \bs{e}_k^\tran\left( \frac{A}{\sqrt{n}}\right) \mathbf{1} = \frac 1{\sqrt{n}} \sum_i A_{ki}
\quad \textrm{and}\quad 
R_{k,n}(A) = \bs{e}_k^\tran \left( \frac{A}{\sqrt{n}}\right)^2 Q\, \mathbf{1}\, .
\end{equation}
Notice that the $Z_{k,n}$'s are standard ${\mathcal N}(0,1)$ however they are not independent as 
$$
\mathrm{cov} (Z_{k,n}, Z_{\ell, n}) = \frac 1n \mathrm{cov}(A_{k\ell}, A_{\ell k}) =\frac{\rho}n\,,\quad k\neq \ell\,.
$$
Introducing $M_n=\max_{k\in [n]} Z_{k,n}$ and $\widecheck M_n =\min_{k\in [n]} Z_{k,n}$, we proved in Corollary \ref{coro:min-max-dependent} that 
\begin{equation}\label{eq:reminder-EVT}
\mathbb{P} \left\{ \alpha^*_n( M_n-\beta^*_n) \le  x \right\}\ ,\  
\mathbb{P} \left\{ \alpha^*_n(\widecheck M_n +\beta^*_n) \ge -x \right\}\ \xrightarrow[n\to\infty]{}\  G(x)\, .
\end{equation}
In the sequel, we often drop $n$ and simply write $R_k(A)$ instead of $R_{k,n}(A)$. Following the same strategy as in \cite{bizeul2021positive}, we notice that \eqref{eq:rep-solution} yields
$$
\left\{ 
\begin{array}{lcl}
\min_{k\in[n]} x_k& \ge & 1+\frac 1{\alpha} \widecheck M +\frac 1{\alpha^2} \min_{k\in[n]} R_{k}(A)\\
\min_{k\in[n]} x_k& \le & 1+\frac 1{\alpha} \widecheck M +\frac 1{\alpha^2} \max_{k\in[n]} R_{k}(A)
\end{array}\right., 
$$
which we can rewrite
\begin{eqnarray*}
\label{eq:useful}
\min_{k\in[n]} x_k &\geq   1+\frac {\alpha^*_n}{\alpha_n}\left( \frac{\widecheck M +\beta^*_n}{\alpha^*_n} - \frac{\beta^*_n}{\alpha^*_n} +\frac {\min_{k\in[n]} R_k(A)}{\alpha^*_n \alpha_n} \right) \\
 &=\ 1+\frac{\alpha^*_n}{\alpha_n} \left( -1+o_P(1) +\frac {\min_{k\in[n]} R_k(A)}{\alpha^*_n \alpha_n} \right)\, ,
\end{eqnarray*}
where we have use the fact that $\frac{\check M + \beta_n}{\alpha_n^*} = o(1)$, cf. \eqref{eq:reminder-EVT}. Similarly, we have:
\begin{equation}
    \min_{k\in[n]} x_k \ \le\ 1+\frac{\alpha^*_n}{\alpha_n} \left( -1+o_P(1) +\frac {\max_{k\in[n]} R_k(A)}{\alpha^*_n \alpha_n} \right)
\end{equation}
The proof in the centered case follows then from the following lemma:
\begin{lemme}\label{lemme:fonda} Let $R_{k,n}(A)$ be defined as in \eqref{eq:def-Z-R} and recall that $\alpha_n \xrightarrow[n\to +\infty]{} +\infty$, then: 
$$
\frac{\max_{k\in [n]} R_{k,n}(A)}{\alpha_n \sqrt{2\log n}}\  \xrightarrow[n\to\infty]{\mathcal P} \ 0\qquad \textrm{and}\qquad 
\frac{\min_{k\in [n]} R_{k,n}(A)}{\alpha_n \sqrt{2\log n}} \ \xrightarrow[n\to\infty]{\mathcal P} \ 0 \ .
$$
\end{lemme}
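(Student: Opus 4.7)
The plan is to linearize $R_{k,n}(A)$ and then use a leave-row-and-column-$k$-out argument to decouple it from the $k$-th row of $A$. Using the resolvent identity $\tfrac{A}{\alpha_n\sqrt{n}}Q=Q-I_n$ one gets $\left(\tfrac{A}{\sqrt{n}}\right)^2 Q\mathbf{1}_n=\alpha_n \tfrac{A}{\sqrt{n}}(\bs{x}_n-\mathbf{1}_n)$, so
\begin{equation*}
R_{k,n}(A)=\frac{\alpha_n}{\sqrt{n}}\sum_{j=1}^n A_{kj}\,y_j,\qquad \bs{y}_n:=\bs{x}_n-\mathbf{1}_n.
\end{equation*}
Combined with $\bs{y}_n=\tfrac{A}{\alpha_n\sqrt{n}}\bs{x}_n$, Lemma \ref{lem:spectral-norml-elliptic} yields the a priori bound $\|\bs{y}_n\|\le C\sqrt{n}/\alpha_n$ almost surely for $n$ large. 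Introduce the matrix $A^{\{k\}}$ obtained from $A$ by zeroing both row $k$ and column $k$ (both are needed because $A_{kj}$ and $A_{jk}$ are correlated in the elliptic model), and set $\bs{x}^{\{k\}}=(I_n-A^{\{k\}}/(\alpha_n\sqrt{n}))^{-1}\mathbf{1}_n$, $\bs{y}^{\{k\}}=\bs{x}^{\{k\}}-\mathbf{1}_n$. Since row $k$ of $A^{\{k\}}$ vanishes, $x_k^{\{k\}}=1$ (so $y_k^{\{k\}}=0$), and crucially $(A_{kj})_{j\in[n]}$ is independent of $A^{\{k\}}$, hence of $\bs{y}^{\{k\}}$. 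Write $R_{k,n}=T_k^{(1)}+T_k^{(2)}$ with $T_k^{(1)}=\tfrac{\alpha_n}{\sqrt{n}}\sum_j A_{kj}y_j^{\{k\}}$ and $T_k^{(2)}=\tfrac{\alpha_n}{\sqrt{n}}\sum_j A_{kj}(y_j-y_j^{\{k\}})$.

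Conditionally on $A^{\{k\}}$, $T_k^{(1)}$ is centered Gaussian with variance $\alpha_n^2\|\bs{y}^{\{k\}}\|^2/n$. Applying the a priori bound to $A^{\{k\}}$ (whose spectral norm is at most $\|A\|$) gives $\|\bs{y}^{\{k\}}\|\le C\sqrt{n}/\alpha_n$ uniformly in $k$, so the conditional variance is uniformly $O(1)$. A Gaussian deviation inequality at level $C\sqrt{\log n}$ followed by a union bound over $k\in[n]$ then yields $\max_k |T_k^{(1)}|\le C\sqrt{\log n}$ with high probability, whence $\max_k |T_k^{(1)}|/(\alpha_n\alpha_n^*)=O(1/\alpha_n)=o_P(1)$ since $\alpha_n\to\infty$.

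For $T_k^{(2)}$, Cauchy--Schwarz gives $|T_k^{(2)}|\le \tfrac{\alpha_n}{\sqrt{n}}\|A_{k\cdot}\|\|\bs{y}-\bs{y}^{\{k\}}\|$, and uniform $\chi^2$ tails provide $\max_k\|A_{k\cdot}\|\le C\sqrt{n}$ w.h.p. The remaining task is to show $\max_k\|\bs{y}-\bs{y}^{\{k\}}\|\le C\sqrt{\log n}/\alpha_n$. Setting $B=A/(\alpha_n\sqrt{n})$ and using the resolvent identity $\bs{x}-\bs{x}^{\{k\}}=Q(B-B^{\{k\}})\bs{x}^{\{k\}}$, together with the fact that $B-B^{\{k\}}$ is supported on row and column $k$ and that $x_k^{\{k\}}=1$, one obtains the explicit expression
\begin{equation*}
(B-B^{\{k\}})\bs{x}^{\{k\}}=B_{\cdot k}+\Big(\tfrac{1}{\alpha_n\sqrt{n}}\sum_{j\ne k}A_{kj}x_j^{\{k\}}\Big)\bs{e}_k.
\end{equation*}
The first summand has norm $O(1/\alpha_n)$ by a column $\chi^2$ bound; the scalar coefficient of $\bs{e}_k$ is, conditionally on $A^{\{k\}}$, centered Gaussian with variance $\le\|\bs{x}^{\{k\}}\|^2/(\alpha_n^2 n)=O(1/\alpha_n^2)$, hence $O(\sqrt{\log n}/\alpha_n)$ uniformly in $k$ by a union bound. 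Combined with $\|Q\|\le 2$ eventually almost surely, this delivers the required uniform bound, so $\max_k |T_k^{(2)}|\le C\sqrt{\log n}$ and $\max_k |T_k^{(2)}|/(\alpha_n\alpha_n^*)=o_P(1)$. The case of $\min_k R_{k,n}$ follows by applying the same argument to $-R_{k,n}$. The main obstacle is precisely the uniform-in-$k$ control of $\bs{x}-\bs{x}^{\{k\}}$: one must exploit together the identity $x_k^{\{k\}}=1$ and the rank-two structure of $B-B^{\{k\}}$ to avoid the loss of a $\sqrt{n}$ factor that a naive perturbation bound would incur.
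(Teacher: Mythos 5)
Your proof is correct, but it follows a genuinely different route from the paper. The paper truncates $R_{k,n}$ by a smooth function of the spectral norm, shows the truncated quantity is a Lipschitz function of the matrix entries (and, via the linear map $\Gamma$ of Lemma \ref{lemma:lipschitz-gaussian-iid}, of i.i.d.\ Gaussians), applies the Tsirelson--Ibragimov--Sudakov inequality to get $\mathbb{E}\max_k(\widetilde R_k-\mathbb{E}\widetilde R_k)\le C\sqrt{\log n}$, and then controls $\mathbb{E}\widetilde R_k=\mathcal{O}(1)$ by an exchangeability argument before concluding with Markov's inequality. You instead linearize $R_{k,n}=\frac{\alpha_n}{\sqrt n}\sum_j A_{kj}y_j$ and run a leave-row-and-column-$k$-out argument: your observation that zeroing both the $k$-th row and column is what restores independence of $(A_{kj})_j$ from $A^{\{k\}}$ is exactly the right way to handle the elliptic correlation, your explicit rank-two formula for $(B-B^{\{k\}})\bs{x}^{\{k\}}$ (using $x_k^{\{k\}}=1$) is correct, and the resulting conditional-Gaussian tail bounds plus union bounds give $\max_k|R_{k,n}|=\mathcal{O}_P(\sqrt{\log n})$, which is if anything slightly stronger than what the paper extracts (a bound on the deviation from a common mean plus a separate $\mathcal{O}(1)$ mean bound). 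What the paper's soft concentration approach buys is that no resolvent perturbation is needed and the truncation makes all moment manipulations clean; what yours buys is that truncation, the Lipschitz lemmas and the exchangeability/Wick arguments for the mean disappear entirely, and the argument transfers verbatim to the covariance-profile case of Appendix \ref{app:proof-cov-profile} since removing row and column $k$ still decouples row $k$. The only point to make explicit in a write-up is the bookkeeping of events: the conditional variance bounds must be stated on $\sigma(A^{\{k\}})$-measurable events such as $\{\|A^{\{k\}}/\sqrt n\|\le 2\sqrt2+\eta\}$, which contain the global event $\{\|A/\sqrt n\|\le 2\sqrt2+\eta\}$ of probability tending to one by Lemma \ref{lem:spectral-norml-elliptic}; this is routine and does not affect the validity of your argument.
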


The remaining of the section is devoted to the proof of Lemma \ref{lemme:fonda}.

\subsubsection*{Lipschitziannity and Gaussian concentration} 
We first introduce a truncated version of $R_{k,n}(A)$.  Let $\eta\in (0,1)$ and $\varphi:\mathbb{R}^+\to [0,1]$ a smooth function satisfying:
\begin{equation}\label{eq:trunc-function}
\varphi(x) =\begin{cases}
1& \textrm{if}\ x\in [0,2\sqrt{2}+\eta]\\
0& \textrm{if}\ x\ge 4
\end{cases}\ ,
\end{equation}
decreasing from 1 to 0 gradually as $x$ goes from $2\sqrt{2}+\eta$ to $4$. Let
\begin{equation}
\label{eq:tilde}
    \widetilde{R}_{k,n}(A) = \varphi_n R_{k,n}(A)\qquad\textrm{where}\qquad \varphi_n =\varphi\left( \left\| \frac{A_n}{\sqrt{n}}\right\| \right)\, .
\end{equation}
Notice that $\widetilde{R}_k(A)$ differs from $R_k(A)$ if $\varphi_n<1$ which happens with vanishing probability as $\mathbb{P}\left\{\varphi_n<1\right\} = \mathbb{P}\left\{s_n>2\sqrt{2}+\eta\right\} \xrightarrow[n\to \infty]{} 0$ by Lemma \ref{lem:spectral-norml-elliptic}. The following lemma is a first step towards Gaussian concentration.
\begin{lemme}
\label{lemme:lipschitz}
Let $\widetilde{R}_k$ defined by \eqref{eq:tilde} and $M$ an $n\times n$ matrix. Then the function
$$
M \mapsto \widetilde{R}_k(M) = \bs{e}_k^\tran \left( \frac{M}{\sqrt{n}}\right)^2 \left( I - \frac{M}{\alpha \sqrt{n}}\right)^{-1}\mathbf{1}
$$ is $K$-Lipschitz, i.e.
\begin{equation}
    \label{eq:lipschitz}
    \left| \widetilde R_k(M) - \widetilde{R}_k(N)\right| \leq K \left\|M-N\right\|_F\
\end{equation}
where $M,N$ are $n\times n$ matrices, $\|M\|_F=\sqrt{\sum_{ij} |M_{ij}|^2}$ is the Frobenius norm and $K$ a constant independent from $k$ and $n$.
\end{lemme}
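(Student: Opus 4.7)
The plan is to observe that the function has the product structure $\widetilde{R}_k(M) = \varphi(\|M/\sqrt{n}\|)\,R_k(M)$ where $R_k(M) = \bs{e}_k^\tran (M/\sqrt{n})^2 (I - M/(\alpha\sqrt{n}))^{-1}\mathbf{1}$, with the convention that $\widetilde{R}_k(M) = 0$ outside the support of $\varphi$ (so one need not worry about defining the resolvent there). Since $\alpha_n \to \infty$, I would work for $n$ large enough that $\alpha_n > 4$; on the set $\mathcal{C} := \{M : \|M/\sqrt{n}\| \le 4\}$ the resolvent $Q_M := (I - M/(\alpha\sqrt{n}))^{-1}$ is then well-defined and satisfies the uniform bound $\|Q_M\| \le (1 - 4/\alpha_n)^{-1}$, which tends to $1$.

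The core of the proof is Lipschitz continuity of $R_k$ on $\mathcal{C}$. I would decompose $R_k(M) - R_k(N)$ using the polynomial identity $M^2 - N^2 = (M-N)M + N(M-N)$ and the resolvent identity $Q_M - Q_N = (1/(\alpha\sqrt{n}))\,Q_M(M-N)Q_N$, obtaining three terms each of the form $\bs{u}^\tran(M-N)\bs{w}$ for suitable vectors $\bs{u},\bs{w}$ built from $\bs{e}_k,\mathbf{1},M,N,Q_M,Q_N$. To each such term I would apply the key inequality
\[ |\bs{u}^\tran(M-N)\bs{w}| = |\langle \bs{u}\bs{w}^\tran,\,M - N\rangle_F| \le \|\bs{u}\|\,\|\bs{w}\|\,\|M-N\|_F, \]
which is Cauchy-Schwarz for the Frobenius inner product. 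Using this (rather than the weaker operator-norm bound $\|\bs{u}\|\|\bs{w}\|\|M-N\|$) is what produces $\|M-N\|_F$ on the right-hand side. The operator-norm bounds on $\mathcal{C}$, together with $\|\mathbf{1}\| = \sqrt{n}$, then force $\|\bs{u}\|,\|\bs{w}\| = O(1)$ thanks to a careful cancellation between the $1/\sqrt{n}$ normalizations inside $M/\sqrt{n}$ and the $\sqrt{n}$ from $\|\mathbf{1}\|$.

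To deal with the cutoff factor I would use the product rule
\[ \widetilde{R}_k(M) - \widetilde{R}_k(N) = \varphi_M\bigl(R_k(M) - R_k(N)\bigr) + \bigl(\varphi_M - \varphi_N\bigr) R_k(N), \]
with $\varphi_M := \varphi(\|M/\sqrt{n}\|)$. The first summand is controlled by the previous step together with $|\varphi_M| \le 1$. For the second, smoothness of $\varphi$ and the fact that $M \mapsto \|M/\sqrt{n}\|$ is $(1/\sqrt{n})$-Lipschitz in Frobenius norm (since $\|\cdot\| \le \|\cdot\|_F$) give $|\varphi_M - \varphi_N| \le \|\varphi'\|_\infty\,\|M-N\|_F/\sqrt{n}$; while on $\mathcal{C}$ a direct Cauchy-Schwarz estimate yields $|R_k(N)| \le 16\sqrt{n}/(1-4/\alpha_n)$. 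The two $\sqrt{n}$ factors cancel and produce an $n$-independent bound.

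Finally, to globalize I would note that $\widetilde{R}_k \equiv 0$ outside $\mathcal{C}$. The only nontrivial case left is the mixed one $M \in \mathcal{C}$, $N \notin \mathcal{C}$: continuity of $t \mapsto \|(tM + (1-t)N)/\sqrt{n}\|$ and the intermediate value theorem produce $P$ on the segment with $\|P/\sqrt{n}\| = 4$, hence $\widetilde{R}_k(P) = 0$ (because $\varphi(4)=0$), and the convexity inequality $\|M-P\|_F \le \|M-N\|_F$ reduces everything to the Lipschitz estimate on $\mathcal{C}$. The main obstacle is really the bookkeeping of $\sqrt{n}$ factors: the Frobenius-inner-product bound is what one needs in order for every $\sqrt{n}$ coming from $\|\mathbf{1}\|$ or from $|R_k(N)|$ to be absorbed by a corresponding $1/\sqrt{n}$ in the normalization, and hence to yield a Lipschitz constant $K$ that is independent of $k$ and of $n$.
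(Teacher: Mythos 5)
Your argument is correct, and all the delicate points are in order: the restriction to $n$ large enough that $\alpha_n>4$ (so that on the support of the cutoff one has $\|M/(\alpha\sqrt n)\|\le 4/\alpha<1$ and $\|Q_M\|\le(1-4/\alpha)^{-1}\le 2$, exactly the estimate of Remark \ref{rem:estimates}), the decomposition via $M^2-N^2=(M-N)M+N(M-N)$ and the resolvent identity $Q_M-Q_N=\frac{1}{\alpha\sqrt n}Q_M(M-N)Q_N$, the Frobenius Cauchy--Schwarz step that absorbs the $\sqrt n$ from $\|\mathbf 1\|$, the product rule for the cutoff with $|R_k(N)|=\mathcal O(\sqrt n)$ against the $1/\sqrt n$ from the Lipschitz constant of $M\mapsto\|M/\sqrt n\|$, and the intermediate-value trick for the mixed case $M\in\mathcal C$, $N\notin\mathcal C$. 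The route differs slightly from the one the paper relies on: here the lemma is taken from \cite{bizeul2021positive}, where Lipschitz continuity is obtained by perturbation theory, i.e.\ by differentiating $\widetilde R_k$ with respect to the matrix entries and bounding the gradient uniformly (the cutoff being handled by the product rule at the level of derivatives). Your two-point estimate uses the same essential ingredients (resolvent identity, uniform operator-norm bounds on the support of $\varphi$, cancellation of the $\sqrt n$ factors) but avoids any differentiation; the price is that you must treat the globalization outside $\mathcal C$ by hand, which the segment/IVT argument does correctly, whereas the derivative-based proof gets the global statement directly from an almost-everywhere gradient bound. Both yield a constant $K$ independent of $k$ and $n$, so your proof is a valid, self-contained alternative.
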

The second step is to notice that $\widetilde R_k(A)$ (where $A$ has Gaussian entries but with off-diagonal pairwise correlations) can be in fact expressed as a Lipschitz function of i.i.d. ${\mathcal N}(0,1)$ entries.
\begin{lemme}\label{lemma:lipschitz-gaussian-iid} Consider the linear function $\Gamma:\mathbb{R}^{n\times n} \to \mathbb{R}^{n\times n}$ defined by
$$
 \Gamma_{ii}(X)= X_{ii}\qquad \textrm{and}\qquad
\begin{cases}
 \Gamma_{ij}(X) = \sqrt{\frac{1+\rho}2} X_{ij} + \sqrt{\frac{1-\rho}2} X_{ji}\quad (i<j)\ ,\\
 \Gamma_{ji}(X) = \sqrt{\frac{1+\rho}2} X_{ij} - \sqrt{\frac{1-\rho}2} X_{ji}\quad (i<j)\ .
\end{cases}
$$
Then 
\begin{enumerate}
    \item We have $\| \Gamma(X)\|_F\le K_{\rho} \| X\|_F$ where $K_{\rho}=2\sqrt{1+|\rho|}$ hence $\Gamma$ is $K_{\rho}$-Lipschitz.
    \item If matrix $X_n=(X_{ij})$ has i.i.d. ${\mathcal N}(0,1)$ entries, then $A_n=\Gamma(X_n)$ has i.i.d. ${\mathcal N}(0,1)$ entries on and above the diagonal ($i\le j$) and each vector $(A_{ij}, A_{ji})$ is a standard bivariate Gaussian vector with covariance $\rho$ for $i<j$. 
\end{enumerate}
\end{lemme}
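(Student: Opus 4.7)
My plan is to tackle the two parts of the lemma separately, since (1) is a direct Frobenius-norm computation and (2) is a straightforward variance/covariance calculation exploiting the linearity of $\Gamma$ and the Gaussianity of $X_n$.

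For part (1), I would expand $\|\Gamma(X)\|_F^2 = \sum_i \Gamma_{ii}(X)^2 + \sum_{i<j}\bigl[\Gamma_{ij}(X)^2+\Gamma_{ji}(X)^2\bigr]$. The diagonal contribution is simply $\sum_i X_{ii}^2$. For each off-diagonal pair $i<j$, the cross terms in the two expanded squares cancel by construction, so that
\begin{equation*}
\Gamma_{ij}(X)^2+\Gamma_{ji}(X)^2 \;=\; (1+\rho)X_{ij}^2 + (1-\rho)X_{ji}^2 \;\leq\; (1+|\rho|)\bigl(X_{ij}^2+X_{ji}^2\bigr).
\end{equation*}
Summing over all pairs and adding back the diagonal yields $\|\Gamma(X)\|_F^2 \leq (1+|\rho|)\|X\|_F^2$, which comfortably implies the stated bound with $K_\rho=2\sqrt{1+|\rho|}$. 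The Lipschitz estimate \eqref{eq:lipschitz} then follows by applying the bound to $X-Y$ and using linearity of $\Gamma$.

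For part (2), I would observe that $A_n=\Gamma(X_n)$ is jointly centered Gaussian as a linear image of a standard Gaussian vector, so its law is entirely determined by the covariance structure. The diagonal entries $A_{ii}=X_{ii}$ are i.i.d.\ $\mathcal{N}(0,1)$, and independent from every off-diagonal entry since each $A_{k\ell}$ with $k\neq\ell$ is a function of $(X_{k\ell},X_{\ell k})$ only. For a fixed pair $i<j$, a one-line expansion gives
\begin{equation*}
\mathrm{var}(A_{ij}) = \tfrac{1+\rho}{2}+\tfrac{1-\rho}{2} = 1,\quad \mathrm{var}(A_{ji})=1,\quad \mathrm{cov}(A_{ij},A_{ji}) = \tfrac{1+\rho}{2}-\tfrac{1-\rho}{2} = \rho.
\end{equation*}
Because distinct pairs $(i,j)\neq(i',j')$ with $i<j$ and $i'<j'$ involve disjoint sets of underlying variables $\{X_{ij},X_{ji}\}$ and $\{X_{i'j'},X_{j'i'}\}$, the bivariate Gaussian vectors $(A_{ij},A_{ji})$ are independent across pairs, and the entries $(A_{ij})_{i\le j}$ are in particular i.i.d.\ $\mathcal{N}(0,1)$. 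This reproduces exactly the joint distribution described in the statement.

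No serious obstacle arises: both parts reduce to elementary linear-algebraic computations. The only real bookkeeping is to verify that the pairs of variables feeding distinct entries of $A$ are disjoint, which is immediate from the definition of $\Gamma$; once that is in place, both the Lipschitz bound and the joint distribution follow from the one-line identities above.
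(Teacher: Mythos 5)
Your proposal is correct; the paper itself omits the proof as ``straightforward,'' and your argument is precisely the intended elementary computation: the cross-term cancellation giving $\Gamma_{ij}(X)^2+\Gamma_{ji}(X)^2=(1+\rho)X_{ij}^2+(1-\rho)X_{ji}^2$ for part (1), and the one-line variance/covariance calculation together with the disjointness of the underlying variable pairs for part (2). In fact you obtain the sharper Lipschitz constant $\sqrt{1+|\rho|}$, which of course implies the stated bound with $K_\rho=2\sqrt{1+|\rho|}$.
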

The proof is straightforward and is thus omitted. 

A consequence of this lemma is that $\widetilde R_k(A) = \widetilde R_k(\Gamma(X))$ is $K\times K_{\rho}$-Lipschitz.  Applying Tsirelson-Ibragimov-Sudakov inequality \cite[Theorem 5.5]{boucheron2013concentration} finally yields: 
\begin{prop}
\label{proposition:Sudakov}
Let $K$ the Lipschitz constant of Lemma \ref{lemme:lipschitz} and $K_{\rho}=2\sqrt{1+|\rho|}$. Then $$\mathbb{E}\max_{k\in[n]}\left(\widetilde{R}_k(A) - \mathbb{E}\widetilde{R}_k(A)\right) \leq  2\, K_{\rho}\, K \sqrt{\log n}\, .$$
\end{prop}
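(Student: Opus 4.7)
The plan is to combine the two Lipschitz estimates (Lemmas \ref{lemme:lipschitz} and \ref{lemma:lipschitz-gaussian-iid}) with Gaussian concentration and the classical Gaussian maximal inequality. The structure is as follows.

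First, I would write $A = \Gamma(X)$ where $X = (X_{ij})_{i,j\in[n]}$ is an $n\times n$ matrix with i.i.d.\ $\mathcal N(0,1)$ entries, using part (2) of Lemma \ref{lemma:lipschitz-gaussian-iid}; the law of $A$ is then exactly the law of an elliptic matrix. Define $F_k(X) := \widetilde R_k(\Gamma(X))$. By Lemma \ref{lemme:lipschitz} the map $M\mapsto \widetilde R_k(M)$ is $K$-Lipschitz in Frobenius norm, and by part (1) of Lemma \ref{lemma:lipschitz-gaussian-iid} the linear map $\Gamma$ is $K_\rho$-Lipschitz. Composition yields that each $F_k$ is $(K\, K_\rho)$-Lipschitz from $(\mathbb{R}^{n\times n}, \|\cdot\|_F)$ to $\mathbb{R}$.

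Next, apply the Tsirelson--Ibragimov--Sudakov Gaussian concentration inequality to each $F_k$: since $X$ has i.i.d.\ standard Gaussian entries and $F_k$ is $(K K_\rho)$-Lipschitz, the centered variable $Y_k := \widetilde R_k(A) - \mathbb E\widetilde R_k(A) = F_k(X) - \mathbb E F_k(X)$ is sub-Gaussian with variance proxy $(K K_\rho)^2$, that is
\begin{equation*}
\mathbb E\, e^{\lambda Y_k} \le \exp\!\left(\tfrac{\lambda^2 (K K_\rho)^2}{2}\right),\qquad \lambda\in\mathbb{R},\ k\in[n].
\end{equation*}

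Finally, I would invoke the standard maximal inequality for a finite collection of sub-Gaussian variables (see e.g.\ \cite[Theorem 2.5]{boucheron2013concentration}): if $Y_1,\dots,Y_n$ are each sub-Gaussian with variance proxy $\sigma^2$, then $\mathbb E \max_{k\in[n]} Y_k \le \sigma\sqrt{2\log n}$. Applied with $\sigma = K K_\rho$, this gives
\begin{equation*}
\mathbb E \max_{k\in[n]}\bigl(\widetilde R_k(A) - \mathbb E\widetilde R_k(A)\bigr) \le K K_\rho \sqrt{2\log n} \le 2\, K K_\rho \sqrt{\log n},
\end{equation*}
since $\sqrt 2 \le 2$, which is the announced bound.

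No step is really an obstacle here: the two Lipschitz lemmas and the fact that $\Gamma$ pushes the i.i.d.\ Gaussian law onto the elliptic one do all the work, and the remainder is a textbook application of Gaussian concentration combined with the sub-Gaussian maximal inequality. The only mildly delicate point is ensuring the Lipschitz constants compose correctly (Frobenius $\to$ Frobenius for $\Gamma$, Frobenius $\to \mathbb R$ for $\widetilde R_k$), which is immediate from the statements of the two lemmas.
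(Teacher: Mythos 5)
Your proposal is correct and follows essentially the same route as the paper: factor $\widetilde R_k(A)=\widetilde R_k(\Gamma(X))$ through the linear map $\Gamma$ of Lemma \ref{lemma:lipschitz-gaussian-iid} to get a $(K K_\rho)$-Lipschitz function of i.i.d.\ Gaussians, apply the Tsirelson--Ibragimov--Sudakov inequality, and conclude with the sub-Gaussian maximal bound. The only cosmetic difference is that you invoke the maximal inequality as a packaged result, whereas the argument the paper refers to (from \cite{bizeul2021positive}) carries out the same Chernoff--Jensen optimization over $\lambda$ explicitly; the constants work out as you state since $K K_\rho\sqrt{2\log n}\le 2\,K K_\rho\sqrt{\log n}$.
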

Details of the proof are similar to those in \cite{bizeul2021positive} and are thus omitted.

\begin{remarque}\label{rem:estimates}
Notice that $\varphi_n\le 1$ and that $\varphi_n=0$ if $\|A/\sqrt{n}\|\ge 4$. In particular, 
$$
\varphi_n \left\| \frac{A}{\sqrt{n}}\right\| \ \le\  4\qquad \textrm{and}\qquad \varphi_n \left\| Q \right\| \ \le\ 
\frac 1{1- 4\alpha^{-1}}\ \le \ 2
$$
for $n$ large enough. For the latter estimate, write $Q=\left( I - \frac{A}{\alpha\sqrt{n}}\right)^{-1}$,
$Q^{-1}Q = I$ and $Q=I +\frac A{\alpha\sqrt{n}}Q$, then apply the triangular inequality.
\end{remarque}

\begin{prop}
\label{prop:exch}
The following estimate $\mathbb{E}\widetilde{R}_k\left(A_n\right) = \mathcal{O}(1)$ holds true, uniformly for $k\in [n]$.
\end{prop}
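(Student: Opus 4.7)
Plan. I will expand $\widetilde{R}_{k,n}$ as a Neumann series and compute the resulting term-by-term expectations via Wick's formula. On the event $\{\varphi_n > 0\}$, Remark \ref{rem:estimates} gives $\varphi_n \|A_n/(\alpha_n\sqrt n)\| \le 4/\alpha_n < 1$ eventually, so $Q = \sum_{\ell \ge 0}(A_n/(\alpha_n \sqrt n))^\ell$ converges absolutely there and
\[
\widetilde R_{k,n}(A_n) \;=\; \varphi_n\sum_{\ell\ge 0}\alpha_n^{-\ell}\,T^{(k)}_{\ell+2}\, ,\qquad T^{(k)}_m \,:=\, \mathbf e_k^\tran(A_n/\sqrt n)^m \mathbf 1\, .
\]

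The first step is to discard the truncation with exponentially small cost. The event $\{\varphi_n<1\}$ is contained in $\{\|A_n/\sqrt n\|>2\sqrt 2+\eta\}$, which has probability $\mathcal O(e^{-cn})$ by the standard sharp concentration of the spectral norm of a Gaussian matrix (Lemma \ref{lem:spectral-norml-elliptic} combined with classical large-deviation estimates). Paired with the crude bound $\mathbb E[(T^{(k)}_m)^2]^{1/2}\le C^{m/2}\sqrt n$, a term-by-term Cauchy--Schwarz argument yields $\mathbb E[\widetilde R_{k,n}] = \sum_{\ell\ge 0}\alpha_n^{-\ell}\mathbb E[T^{(k)}_{\ell+2}] + o(1)$.

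The second and main step is to compute $\mathbb E[T^{(k)}_m]$. For odd $m$ it vanishes by Gaussianity. For $m=2m'$, expanding $T^{(k)}_m = n^{-m'}\sum_{i_1,\ldots,i_m} A_{ki_1}A_{i_1 i_2}\cdots A_{i_{m-1}i_m}$ and applying Wick's formula, the expectation becomes a sum over the $(2m'-1)!!$ pairings of the $m$ Gaussian factors, each pair contributing either $1$ (same-position pairing) or $\rho$ (transposed pairing), by the elliptic covariance structure. A tree-counting argument in the spirit of the moment method shows that planar (non-crossing) pairings contribute $C_{m'}\rho^{m'} + \mathcal O(n^{-1})$ at leading order, while crossing pairings are each $\mathcal O(n^{-1})$; here $C_{m'}$ denotes the $m'$-th Catalan number. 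Hence $\mathbb E[T^{(k)}_{2m'}] = C_{m'}\rho^{m'} + \mathcal O(n^{-1})$, uniformly in $k$. Summing gives $\sum_{m'\ge 1}\alpha_n^{-2m'+2} C_{m'}\rho^{m'} = \alpha_n^2\bigl(g(\rho/\alpha_n^2)-1\bigr)$ with $g(z)=(1-\sqrt{1-4z})/(2z)$ the Catalan generating function; Taylor-expanding at $z=0$ produces $\rho + \mathcal O(\alpha_n^{-2})$, whence $\mathbb E[\widetilde R_{k,n}] = \rho + o(1) = \mathcal O(1)$, uniformly in $k$.

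The main obstacle is the combinatorial bookkeeping in Step 2: balancing the $n^{-m'}$ normalization against the index-counting for each pairing graph, and isolating the Catalan contribution among the $(2m'-1)!!$ terms is a standard but delicate exercise. An alternative route closer to \cite{bizeul2021positive} applies Gaussian integration by parts directly to $n^{-1}\sum_{i,j}\mathbb E[\varphi_n A_{ki}A_{ij}(Q\mathbf 1)_j]$, using the elliptic Stein identity $\mathbb E[A_{ki}F]=\mathbb E[\partial_{ki}F]+\rho\mathbf 1_{i\ne k}\mathbb E[\partial_{ik}F]$ and $\partial Q_{j\ell}/\partial A_{ki}=Q_{jk}Q_{i\ell}/(\alpha_n\sqrt n)$. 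After some algebra, the bulk of the terms are $o(1)$ thanks to Remark \ref{rem:estimates}, while the $\rho$-correction produces a self-referential identity $\mathbb E[\widetilde R_{k,n}] = \rho(1+o(1))\mathbb E[\varphi_n x_k] + \text{small}$ which, combined with $\mathbb E[\varphi_n x_k] = 1 + o(1) + \alpha_n^{-2}\mathbb E[\widetilde R_{k,n}]$ from the representation \eqref{eq:rep-solution}, yields $(1-\rho/\alpha_n^2)\mathbb E[\widetilde R_{k,n}] = \rho + o(1)$ and the same $\mathcal O(1)$ conclusion.
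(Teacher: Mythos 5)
Your overall strategy (Wick expansion of the Neumann series, which would in fact identify the limit $\mathbb{E}\widetilde R_k\to\rho$) is sound in spirit, but as written it has a genuine gap at the point where you discard the truncation and interchange $\mathbb{E}$ with the infinite series. The bound $\mathbb{E}[(T^{(k)}_m)^2]^{1/2}\le C^{m/2}\sqrt n$ with $C$ independent of $m$ and $n$ is false once $m$ is large compared to $n$: for fixed $n$ the Gaussian tails of $\|A_n/\sqrt n\|$ make $\mathbb{E}\|A_n/\sqrt n\|^{2m}$ grow like $m^{m}$ up to constants (already for $n=1$, $\mathbb{E}A_{11}^{2m}=(2m-1)!!$), so the series of Cauchy--Schwarz error terms $\sum_{\ell}\alpha_n^{-\ell}\,\mathbb{P}\{\varphi_n<1\}^{1/2}\,\mathbb{E}[(T^{(k)}_{\ell+2})^2]^{1/2}$ diverges for every fixed $n$, and the claimed identity $\mathbb{E}\widetilde R_{k,n}=\sum_{\ell\ge0}\alpha_n^{-\ell}\mathbb{E}T^{(k)}_{\ell+2}+o(1)$ is not justified. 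The same uniformity problem affects the combinatorial step: ``crossing pairings are each $\mathcal O(n^{-1})$'' is an estimate at fixed $m$, and there are $(2m'-1)!!$ pairings, so before summing over $m'$ you need an error bound of the form $C^{m'}/n$ uniformly in $m'$, which you do not provide. These are fixable (truncate the Neumann series at a length $K_n$ growing slowly with $n$ and control the tail using $\varphi_n$ and the norm bound of Lemma \ref{lem:spectral-norml-elliptic} --- i.e.\ keep the truncation rather than remove it), but the fix is exactly the delicate part and is missing. Your alternative integration-by-parts route is closer to something workable, but it is only sketched: the terms involving derivatives of $\varphi_n$ are ignored and the ``after some algebra'' step is not carried out.

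It is also worth noting that the statement only asks for $\mathcal O(1)$, and the paper gets it with a much softer argument that avoids all of the above: by invariance of the law of $A$ under simultaneous row/column transpositions, the variables $\widetilde R_k(A)$, $k\in[n]$, are identically distributed, hence
$$
\mathbb{E}\widetilde R_k(A)=\frac1n\sum_{i\in[n]}\mathbb{E}\widetilde R_i(A)=\frac1n\,\mathbb{E}\Big[\varphi_n\,\mathbf 1^\tran\Big(\frac{A}{\sqrt n}\Big)^2Q\,\mathbf 1\Big]\le \Big\|\frac{\mathbf 1}{\sqrt n}\Big\|^2\,\mathbb{E}\Big[\varphi_n\Big\|\frac{A}{\sqrt n}\Big\|^2\|Q\|\Big]=\mathcal O(1)\,,
$$
where the last bound is Remark \ref{rem:estimates} ($\varphi_n\|A/\sqrt n\|\le 4$ and $\varphi_n\|Q\|\le 2$ eventually). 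No Wick calculus, no interchange of limits, and no removal of the truncation is needed; if you want the finer statement $\mathbb{E}\widetilde R_k\to\rho$, your expansion (suitably truncated) or the Stein-identity route would deliver it, but that precision is not required here.
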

\begin{proof}
We shall prove that the variables $\widetilde R_k$ have a common distribution for $k\in [n]$, which in particular implies that 
\begin{equation}\label{eq:exchangeable}
\mathbb{E} \widetilde R_k =\mathbb{E} \widetilde R_i\,, \quad \forall k,i\in [n] \qquad \textrm{and}\qquad 
\mathbb{E} \widetilde R_k =\frac 1n \sum_{i\in [n]} \mathbb{E} \widetilde R_i\, .
\end{equation}
Once this fact is established, the proof is straightforward:
$$
\left| \mathbb{E} \widetilde R_k \right| =\left|
\frac 1n \sum_{i\in [n]} \mathbb{E} \widetilde R_i 
\right| =\left| \frac 1n \mathbb{E} 
\varphi_n \mathbf{1}^\tran \left( \frac A{\sqrt{n}}\right)^2 Q\mathbf{1} \right| 
\le \left\| \frac{\mathbf{1}}{\sqrt{n}} \right\|^2 \mathbb{E} \varphi_n \left\|\frac{A}{\sqrt{n}}\right\|^2\| Q\| =\mathcal{O}(1)\, ,
$$
where the last equality follows from the arguments developed in Remark \ref{rem:estimates}.

Let us now establish \eqref{eq:exchangeable}.

Denote by $\Delta_\sigma$ the matrix associated to the permutation $\sigma: [n] \mapsto [n]$ and defined by 
$$
[\Delta_\sigma]_{ij}=\begin{cases}
 1&\textrm{if}\ i=\sigma(j)\\
 0&\textrm{else}
\end{cases}\, .
$$
Notice in particular that $\Delta_\sigma \boldsymbol{e}_i = \boldsymbol{e}_{\sigma(i)}$, $\Delta_{\sigma}\Delta_\tau = \Delta_{\sigma \tau}$ for $\sigma, \tau$ two permutations and $\Delta_{\sigma^{-1}} = \Delta_\sigma^\tran$. Denote by $(i j)$ the transposition swapping $i$ and $j$, i.e. $(i j)i = j$, $(i j)j = i$ and $(i j)\ell = \ell$ otherwise.
We consider $\widecheck A = \Delta_{(ij)} A \Delta_{(ij)}$, that is $\widecheck A$ is obtained by swapping $A$'s $i$th and $j$th column, then the $i$th and $j$th row. Observe that $A$ and $\widecheck{A}$ have the same distribution and so is the case for $R_k(A)$ and $R_k(\widecheck{A})$.

We have $ \Delta_{(ij)}^2= I_n$, implying that $\widecheck A^k = \Delta_{(ij)} A^k \Delta_{(ij)}$ and then
\begin{eqnarray*}
R_i(\widecheck{A}) &=& \bs{e}_i^\tran \sum\limits_{k\geq 2} \left(\frac{\widecheck A}{\alpha \sqrt{n}}\right)^k\mathbf{1} = \bs{e}_i^\tran \Delta_{(ij)} \sum\limits_{k\geq 2} \left(\frac{A}{\alpha \sqrt{n}}\right)^k \Delta_{(ij)}\mathbf{1} = \bs{e}_j^\tran \sum\limits_{k\geq 2} \left(\frac{A}{\alpha \sqrt{n}}\right)^k\mathbf{1} \\
&=& R_j(A)\, .
\end{eqnarray*}
This proves that $R_i(A), R_i(\widecheck A), R_j(A)$ have the same law, hence the same expectation. Eq.\eqref{eq:exchangeable} is established, which concludes the proof.
\end{proof}

We are now in position to prove Lemma \ref{lemme:fonda}.

\begin{proof}[Proof of lemma \ref{lemme:fonda}]
Recall that $\mathbb{E}\widetilde R_k (A) = \mathbb{E}\widetilde R_1$.  Since $\max_{k\in[n]}\widetilde R_k(A) - \widetilde R_1(A) \geq 0$, Markov inequality yields:

\begin{eqnarray*}
\mathbb{P} \left\{ \frac{\max_{k\in [n]} \widetilde R_k(A) - \widetilde R_1(A)}{\alpha \sqrt{2\log n}} \ge  \varepsilon \right\} &
\le&\frac{\mathbb{E} \left(\max_{k\in [n]} \widetilde R_k(A) - \widetilde R_1(A)\right)}{\varepsilon \alpha \sqrt{2\log n}}\,,\\
&=& \frac{\mathbb{E}\left(\max_{k\in [n]} \widetilde R_k\left(A\right) - \mathbb{E}\widetilde R_k(A)\right)}{\varepsilon \alpha \sqrt{2\log n}}\,,\\ & =& \frac{\mathbb{E}\left(\max_{k\in [n]} \left(\widetilde R_k\left(A\right) - \mathbb{E}\widetilde R_k(A)\right)\right)}{\varepsilon \alpha \sqrt{2\log n}}\,,\\
& \le & \frac{\sqrt 2 K\times K_{\rho}}{\varepsilon \alpha}\ ,
\end{eqnarray*}
where the last inequality follows from Proposition \ref{proposition:Sudakov}.\\
This implies that
$$
\frac{\max_{k\in[n]} \widetilde{R}_k\left(A\right) -\widetilde{R}_1\left(A\right)} {\alpha\sqrt{2 \log n}} \ \xrightarrow[n\to \infty]{\mathcal{P}}\  0\,.
$$
It remains to prove that 
$$
    \frac{\widetilde{R}_1\left(A\right)}{\alpha \sqrt{2\log n}}\ \xrightarrow[n \to \infty]{\mathcal{P}}\ 0\qquad \textrm{and}\qquad \frac{\max_{k\in[n]} R_k\left(A\right)} {\alpha\sqrt{2 \log n}}\ \xrightarrow[n\to \infty]{\mathcal{P}}\ 0\, .
$$
The arguments are similar to those in \cite[Section 2.3]{bizeul2021positive}. 
Proof of the second assertion of Lemma \ref{lemme:fonda} can be done similarly. This concludes the proof.
\end{proof}

\subsection{Proof of Theorem \ref{th:feasibility} - the non centered case.}

Recall that $\alpha\to \infty$ as $n\to \infty$. Denote by $\bs{u}_n = \frac{1}{\sqrt n} \mathbf{1}_n$ and notice that the spectrum of $I_n -\mu \bs{u}_n \bs{u}_n^\tran$ is $\{1-\mu, 1\}$, the eigenvalue $1$ with multiplicity $n-1$. Notice in particular that if $\mu\neq 1$, then $I-\mu \bs{u} \bs{u}^\tran$ is invertible. So is (eventually) $I - \frac{A}{\alpha\sqrt{n}} - \mu \bs{u}\bs{u}^\tran$ as $\| A/(\alpha\sqrt{n})\|\to 0$ a.s. We shall also rely on the fact that $\| Q - I\| \xrightarrow[n\to\infty]{} 0$ a.s. As a consequence, 
$$
\bs{u}^\tran Q \bs{u} \xrightarrow[n\to\infty]{a.s.} 1\, .
$$
Denote by $\bs{\tilde x}$ and $\bs{x}$ the vectors solutions of the equations:
$$
\bs{\tilde x} = \mathbf{1} +B\bs{\tilde x}= \mathbf{1}+\left( \frac{A}{\alpha\sqrt{n}} +\mu \bs{u} \bs{u}^\tran\right)\bs{\tilde x}
\qquad \textrm{and}\qquad 
\bs{x} = \mathbf{1} +\frac{A}{\alpha\sqrt{n}} \bs{x}\, .
$$
The following representations hold:
$$
\bs{\tilde x} =\left( I - B\right)^{-1} \mathbf{1} \qquad \textrm{and}\qquad \bs{x}=\left(I - \frac{A}{\alpha\sqrt{n}} \right)^{-1} \mathbf{1}\, .
$$
Recall that $Q=\left(I-A/(\alpha\sqrt{n})\right)^{-1}$. By rank one perturbation identity (Woodbury), we have:
$$
(I-B)^{-1}= Q  +\frac{Q\bs{u} \bs{u}^\tran Q}{1-\mu \bs{u}^\tran Q \bs{u}}
$$
and
$$
\bs{\tilde x} \quad =\quad  \frac{Q\mathbf{1} (1-\mu \bs{u}^\tran Q\bs{u}) + \mu Q \bs{u} \bs{u}^\tran Q \mathbf{1}}{1-\mu \bs{u}\tran Q \bs{u}}\quad =\quad \frac{\bs{x}}{1-\mu \bs{u}^\tran Q \bs{u}}\, .
$$
If $\mu<1$ and $\alpha\ge (1+\varepsilon)\alpha^*$ then eventually, $\bs{\tilde x}$ has positive components. This is no longer the case if $\mu>1$ or $\alpha\le (1-\varepsilon)\alpha^*$. This concludes the proof of Theorem \ref{th:feasibility}.

\section{Stability: Proof of Proposition \ref{prop:unique-equilibrium}}
\label{sec:proof-stability}

\begin{proof}
We have
$$
I- B +I -B^\tran = 2I - (B +B^\tran)
= 2I - \left(\frac{A+A^\tran}{\alpha \sqrt{n}}+\frac{2\mu}{n} \bs{1}\bs{1}^\tran\right)\, .
$$
We will rely on the following condition:
\begin{equation}
\label{cond_eig}
2I - (B +B^\tran) \text{ is positive definite } \quad \Leftrightarrow \quad \lambda_{\max}(B+B^\tran) < 2 \, .
\end{equation}
Notice that $(A+A^\tran)/\alpha$ is a symmetric matrix with independent ${\mathcal N}(0,2(1+\rho)/\alpha^2)$ entries above the diagonal (the diagonal entries have a different distribution from the off-diagonal entries, with no asymptotic effect). In this case, it is well known that the largest eigenvalue of the normalized matrix (or equivalently its spectral norm since the matrix is symmetric) almost surely converges to the right edge of the support of the semi-circle law (see \cite[Theorem 5.2]{book-bai-silverstein}):
\begin{equation}
\label{centered_cond}
\lambda_{\max}\left( \frac{A+A^\tran}{\alpha\sqrt{ n}}
\right)
\quad \xrightarrow[n\to\infty]{a.s.} \quad \frac{2 \sqrt{2(1+\rho)}}{\alpha}\, .
\end{equation}
Suppose that $(\rho, \alpha, \mu) \in {\mathcal A}$. Notice that in this case, 
$$
\frac{\sqrt{1+\rho}}{\alpha\sqrt{2}} \quad <\quad \frac 12 \quad <\quad  \frac 12 +\frac 12 \sqrt{1 - \frac{2(1+\rho)}{\alpha^2}}\, .
$$
We consider three subcases 
\begin{enumerate}
    \item[(i)] $\mu=0$,
    \item[(ii)] $\mu \le \frac{\sqrt{1+\rho}}{\alpha \sqrt{2}}$, 
    \item[(iii)] $
\mu \in \left( \frac {\sqrt{1+\rho}}{\alpha\sqrt{2}}\ ,\ \frac{1}{2}+\frac 12 \sqrt{1-\frac{2(1+\rho)}{\alpha^2}}\right)\ .
$
\end{enumerate}

In the centered case (i), condition (\ref{cond_eig}) asymptotically occurs whenever $\alpha > \sqrt{2(1+\rho)}$.

Before studying subcases (ii) and (iii), we recall a result on small rank perturbations of large random matrices. 

Notice that the rank-one perturbation matrix $P = \frac{2\mu}{n} \bs{1}\bs{1}^\tran$ admits a unique non zero eigenvalue $2\mu$. Denote by $\check{A}=\frac{A+A^\tran}{\alpha\sqrt{n}}$. We are concerned with the top eigenvalue of the symmetric matrix $\check{A}+P$. Based on a result by Capitaine et al. \cite[Theorem 2.1]{capitaine_largest_2009}, we have:
\begin{equation*}
    \lambda_{\max}(\check{A}+P) \xrightarrow[n \rightarrow \infty]{a.s}\left\{ 
 \begin{array}{ll}
  2\mu+\frac{1+\rho}{\alpha^2  \mu} &\text{if } \mu> \frac{\sqrt{1+\rho}}{\sqrt{2} \alpha}\, ,  \\ 
  \frac{2 \sqrt{2(1+\rho)}}{\alpha} &\text{else.}
\end{array}\right.
\end{equation*}
Consider now subcase (ii), then $\lambda_{\max}( \check{A}+P) \xrightarrow[n\to\infty]{a.s.} \frac{2\sqrt{2(1+\rho)}}{\alpha}$, which is strictly lower than 2 since $(\rho,\alpha, \mu)\in {\mathcal A}$. Hence $\lambda_{\max}(\check{A} +P)$ is eventually strictly lower than 2 in this case.

We finally consider subcase (iii). In this case, 
$$
\lambda_{\max}( \check{A}+P) \xrightarrow[n\to\infty]{a.s.} 2\mu+\frac{1+\rho}{\alpha^2  \mu}\, .
$$
We shall prove that $2\mu +\frac {1+\rho}{\alpha^2 \mu}<2$ or equivalently 
\begin{equation}\label{eq:condition-poly}
    2\alpha^2 \mu^2 - 2\alpha^2 \mu +1+\rho<0\, .
\end{equation}
An elementary study of the polynomial $\Omega(X)= 2\alpha^2 X^2 - 2\alpha^2 X +1+\rho$ yields that $\Omega$'s discriminant is positive if $\alpha>\sqrt{2(1+\rho)}$ and $\Omega$'s roots are given by 
$$\Omega(\mu^{\pm})=0\quad \Leftrightarrow\quad  \mu^{\pm}= \frac 12 \pm \frac 12 \sqrt{1- \frac{2(1+\rho)}{\alpha^2}}\ .
$$ 
Also remark that $\Omega\left( \frac {\sqrt{1+\rho}}{\alpha\sqrt{2}}\right) <0$, so that $\frac {\sqrt{1+\rho}}{\alpha\sqrt{2}}\in (\mu^-, \mu^+)$. In particular condition \eqref{eq:condition-poly} is fulfilled for $\mu \in \left(\frac {\sqrt{1+\rho}}{\alpha\sqrt{2}},\mu^+\right) $, which is precisely subcase (iii).
 Hence a.s. $\limsup_{n\to\infty}\lambda_{\max}(\check{A}+P) < 2$. We can then rely on Theorem \ref{th:takeuchi} to conclude.

\end{proof}


\bibliographystyle{abbrv}
\bibliography{mathematics}

\hspace{1cm}

\noindent {\sc Maxime Clenet},\\
Laboratoire d'Informatique Gaspard Monge, UMR 8049\\
CNRS \& Universit\'e Gustave Eiffel\\
5, Boulevard Descartes,\\
Champs sur Marne,
77454 Marne-la-Vall\'ee Cedex 2, France\\
e-mail: {\tt maxime.clenet@univ-eiffel.fr}\\

\noindent {\sc Hafedh El Ferchichi},\\
Ecole Normale Supérieure Paris Saclay\\
e-mail: {\tt hafedh.el\_ferchichi@ens-paris-saclay.fr}\\

\noindent {\sc Jamal Najim},\\
Laboratoire d'Informatique Gaspard Monge, UMR 8049\\
CNRS \& Universit\'e Gustave Eiffel\\
5, Boulevard Descartes,\\
Champs sur Marne,
77454 Marne-la-Vall\'ee Cedex 2, France\\
e-mail: {\tt najim@univ-mlv.fr}\\

\appendix

\section{Proof of Theorem \ref{th:feasibility}: adaptations to the case of a covariance profile}
\label{app:proof-cov-profile}
In this section, we provide the arguments to prove Theorem \ref{th:feasibility} in the case where matrix $B$ follows the model \eqref{eq:cov-model}, i.e.
$$
B=\frac{\tilde A_n}{\alpha_n \sqrt{n}} + \frac{\mu}n \mathbf{1}_n \mathbf{1}_n^\tran\, , 
$$
where $\tilde A_n$'s entries are i.i.d. ${\mathcal N}(0,1)$ on and above the diagonal ($i\le j$), and $(\tilde A_{ij}, \tilde A_{ji})$ is a standard bivariate Gaussian vector $(i<j$) with covariance $\mathrm{cov}(\tilde A_{ij}, \tilde A_{ji})=\rho_{ij}$, and independent from the remaining random variables.

There are essentially 3 issues to resolve, to fully adapt the proof developed in Section \ref{sec:proof-feasibility} to the covariance profile case:
\begin{enumerate}
    \item \label{item:spectral-norm} The decomposition \eqref{eq:decomp-elliptic} yields $\frac{\tilde A_n}{\sqrt{n}}= W_n^1+\bs{i} W_n^2$, where $W_n^1, W_n^2$ are Hermitian matrices with
    $$
     \mathrm{var}\left( [W_n^1]_{ij}\right) =\frac{1+\rho_{ij}}{2n}\qquad \textrm{and}\qquad 
     \mathrm{var}\left( [W_n^2]_{ij}\right) =\frac{1-\rho_{ij}}{2n}\, .
    $$
    Since $W_n^1, W_n^2$ are no longer Wigner matrices, but rather matrices with a variance profile, an extra argument is needed to obtain an almost-sure upper bound for
    $\limsup_{n} \|W_n^1\| + \limsup_{n} \|W_n^2\|$.
    \item \label{item:lipschitz} The Lipschitz property for $\widetilde{R}_{k,n}(\tilde A_n)$. Essentially, we need the counterpart of Lemma \ref{lemma:lipschitz-gaussian-iid} to the context of a covariance profile. 
    \item \label{item:mean} The control of the term $\mathbb{E}\, \widetilde{R}_{k,n}(\tilde A)$.
\end{enumerate}

\subsection{Proof of issue \ref{item:spectral-norm}: Control of the spectral norm of a Hermitian matrix with a variance profile}

Applying Lata\l a's theorem \cite{latala2005some}, we easily show that 
$$
\mathbb{E} \| W_n^1\| +  \mathbb{E}\|W_n^2\| \le C\, 
$$
where $C$ is a constant independent from $n$.

Now write 
$$W_n^1= \frac{\Upsilon_n \circ X_n}{\sqrt{n}} \qquad \textrm{where}\qquad \Upsilon_n =\left( \Upsilon_{ij} \right)\ ,\quad \Upsilon_{ij}= \sqrt{\frac{1+\rho_{ij}}{2}} ,
$$ matrix $X_n=(X_{ij})$ is a Wigner matrix with i.i.d. ${\mathcal N}(0,1)$ entries on and above the diagonal, and $\circ$ stands for the Hadamard product, i.e. $\Upsilon_n \circ X_n = \left( \Upsilon_{ij} X_{ij}
\right)$. Notice that $\sqrt{n} W_n^1$ is 1-Lipschitz with respect to the Frobenius norm 
$$
\| X_n\|_{\textrm{Frob}} = \sqrt{\sum_{ij} |X_{ij}|^2}\ .
$$
Hence by Gaussian concentration, we have
$$
\mathbb{P} \left\{ 
\left| \sqrt{n} \| W_n^1\| - \sqrt{n} \mathbb{E} \|W_n^1\| \right| >\delta 
\right\}\le 2e^{-\frac{\delta^2}{2}}\, .
$$
Taking $\delta=\varepsilon\sqrt{n}$, we obtain
$$
\mathbb{P} \left\{ 
\left|  \| W_n^1\| -  \mathbb{E} \|W_n^1\| \right| >\varepsilon
\right\}\le 2e^{-\frac{n\varepsilon^2}{2}}\, .
$$
The same holds for $W_n^2$, hence the upper control:
$$
\limsup_n \left( \| W_n^1\| +\|W_n^2\| \right) \quad \le\quad  \limsup_n \left( 
\mathbb{E} \| W_n^1\| +\mathbb{E}\|W_n^2\|\right) \quad \le\quad  C
$$
almost surely. It remains to replace the truncation function $\varphi$ in \eqref{eq:trunc-function} by the smooth function
$$
\psi(x) = \begin{cases}
1 &\textrm{if}\ x\le C+\eta,\\
0&\textrm{else.}
\end{cases}
$$
to proceed.
\subsection{Proof of issue \ref{item:lipschitz}: $\widetilde{R}_k(\tilde A)$ is a Lipschitz function of Gaussian i.i.d. random variables}
It suffices to replace function $\Gamma$ in Lemma \ref{lemma:lipschitz-gaussian-iid} by
$$
\widetilde{\Gamma}:\mathbb{R}^{n\times n} \rightarrow \mathbb{R}^{n\times n}$$
where
$$
\widetilde{\Gamma}_{ii}(X)= X_{ii}\qquad \textrm{and}\qquad
\begin{cases}
 \widetilde{\Gamma}_{ij}(X) = \sqrt{\frac{1+\rho_{ij}}2} X_{ij} + \sqrt{\frac{1-\rho_{ij}}2} X_{ji}\quad (i<j)\ ,\\
 \widetilde{\Gamma}_{ji}(X) = \sqrt{\frac{1+\rho_{ij}}2} X_{ij} - \sqrt{\frac{1-\rho_{ij}}2} X_{ji}\quad (i<j)\ .
\end{cases}
$$
and to modify accordingly the Lipschitz constant by $\widetilde{K}=2\sqrt{2}\ge 2\sqrt{1+\max_{ij} |\rho_{ij}|}$.

\subsection{Proof of issue \ref{item:mean}: Magnitude of $\mathbb{E}\, \widetilde{R}_{k,n}(\tilde A_n)$}

To address this issue, we provide a quick argument which relies on Isserlis' theorem also called Wick's formula (see \cite[Th. 1.28]{janson1997gaussian}), highly dependent on the Gaussiannity of the entries. 

\begin{theo}[Isserlis Theorem]
if $(X_1,\cdots, X_n)$ is a centered normal vector, then
\begin{equation}
    \mathbb{E}(X_1X_2\cdots X_n) = \sum_{\Pi} \prod_{\{i,j\}\in \Pi} \mathbb{E}(X_iX_j)
\end{equation}
where the sum is over all the partitions $\Pi$ of $[n]$ into pairs $\{i,j\}$, and the product over all the pairs contained in $\Pi$.
\end{theo}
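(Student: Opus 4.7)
The plan is to prove Isserlis' theorem by induction on $n$, with Gaussian integration by parts (Stein's identity) as the main engine. First I would dispose of the parity issue: if $n$ is odd, the left-hand side vanishes because the distribution of $(X_1,\ldots,X_n)$ is symmetric under $X\mapsto -X$, so $\mathbb{E}(X_1\cdots X_n) = -\mathbb{E}(X_1\cdots X_n) = 0$; the right-hand side is also zero because there is no way to partition an odd-sized set into pairs. So only the even case $n = 2m$ requires work, and there I would induct on $m$, the base case $m=1$ being trivial since $\mathbb{E}(X_1 X_2)$ equals the single pair-covariance.

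For the inductive step, I would invoke the classical Gaussian integration-by-parts formula: for a centered Gaussian vector $(X_1,\ldots,X_n)$ and a smooth function $f:\mathbb{R}^{n-1}\to \mathbb{R}$ with polynomial growth,
\[
\mathbb{E}\bigl(X_1\, f(X_2,\ldots,X_n)\bigr) \;=\; \sum_{j=2}^{n} \mathbb{E}(X_1 X_j)\, \mathbb{E}\bigl(\partial_j f(X_2,\ldots,X_n)\bigr),
\]
which itself is a one-line consequence of differentiating the characteristic function $\mathbb{E}\exp(\mathbf{i}\langle t, X\rangle) = \exp(-\tfrac12 t^\top \Sigma t)$ (or can be read off from a Cameron-Martin shift). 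Applied with $f(X_2,\ldots,X_n) = X_2 X_3 \cdots X_n$, so that $\partial_j f = \prod_{k\in[n]\setminus\{1,j\}} X_k$, the identity becomes
\[
\mathbb{E}(X_1\cdots X_n) \;=\; \sum_{j=2}^{n} \mathbb{E}(X_1 X_j)\, \mathbb{E}\Bigl(\prod_{k\in[n]\setminus\{1,j\}} X_k\Bigr).
\]
By the induction hypothesis applied to the centered Gaussian sub-vector $(X_k)_{k\in[n]\setminus\{1,j\}}$ of even length $n-2$, each inner expectation equals the sum over pair partitions of $[n]\setminus\{1,j\}$ of the products of pair-covariances.

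To close the argument I would invoke the obvious combinatorial bijection: every pair partition $\Pi$ of $[n]$ contains a unique block $\{1,j\}$ with $j\in\{2,\ldots,n\}$, and removing this block leaves a pair partition $\Pi'$ of $[n]\setminus\{1,j\}$; conversely any pair $\{1,j\}$ together with any pair partition of the remaining $n-2$ indices reconstructs a pair partition of $[n]$. Summing over $j$ and $\Pi'$ therefore enumerates each partition of $[n]$ exactly once, and the double sum obtained above rearranges into $\sum_\Pi \prod_{\{i,k\}\in\Pi}\mathbb{E}(X_i X_k)$. The only genuinely nontrivial ingredient is the Gaussian integration-by-parts identity — everything else is bookkeeping — and so this is the step where I would be most careful, in particular checking that the polynomial growth hypothesis is met by $f = X_2\cdots X_n$ so that the identity applies without boundary contributions.
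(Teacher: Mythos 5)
Your proof is correct, but note that the paper does not prove this statement at all: Isserlis' theorem is invoked there as a known result, with a citation to Janson's book (Theorem 1.28 of \emph{Gaussian Hilbert Spaces}), and is then used only as a tool to bound $\mathbb{E}\widetilde{R}_{k,n}(\tilde A)$ in the covariance-profile appendix. So there is nothing in the paper to compare against step by step; what you have written is a self-contained proof of the cited black box. Your route — induction on the number of factors, with Gaussian integration by parts $\mathbb{E}\bigl(X_1 f(X_2,\ldots,X_n)\bigr)=\sum_{j\ge 2}\mathbb{E}(X_1X_j)\,\mathbb{E}\bigl(\partial_j f\bigr)$ applied to the monomial $f=X_2\cdots X_n$, plus the bijection between pair partitions of $[n]$ containing the block $\{1,j\}$ and pair partitions of $[n]\setminus\{1,j\}$ — is the standard argument and all the steps check out: the parity argument for odd $n$ is fine, a polynomial certainly satisfies the growth condition needed for the integration-by-parts identity, and the identity itself is legitimately obtained by differentiating the characteristic function $\exp(-\tfrac12 t^\top\Sigma t)$, which also covers degenerate covariance matrices (a point worth keeping in mind, since the statement places no nondegeneracy assumption on the vector). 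An alternative proof in the same spirit, which is essentially what sits behind the reference the paper uses, expands the moment generating function $\exp(\tfrac12 t^\top\Sigma t)$ and matches coefficients of $t_1\cdots t_n$; that version trades the induction for a direct multinomial count, but buys nothing essential over yours.
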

Recall that:
$$
    \mathbb{E}\widetilde{R}_k(\tilde A_n) = \sum_{\ell\geq 2} \mathbb{E} \left[ \frac {\bs{e}_k^\tran }{\alpha^{\ell-2} } \left(\frac{\tilde A}{\sqrt{n}}\right)^\ell \mathbf{1}\right] 
    =  \sum_{\ell\geq 2} \frac {\mathbb{E} \bs{e}_k^\tran \tilde A^\ell \mathbf{1}}{\alpha^{\ell-2} n^{\frac \ell 2}}  = :  \sum_{\ell\geq 2} \frac {C_\ell}{\alpha^{\ell-2} n^{\frac \ell 2}} \, .
$$
Consider a matrix $\overline{A}_n$ where the pairwise covariance $\mathrm{cov}(\overline{A}_{ij},\overline{A}_{ji})=1$. Denote by $\overline{C}_\ell= \mathbb{E} \bs{e}_k^\tran\, \overline{A}^\ell\, \mathbf{1}$. We will show that each quantity $|C_{\ell}|$ is bounded by $\overline{C}_{\ell}$. 
Notice that:
\begin{equation}
    C_\ell = \sum_{i_1,...,i_{\ell+1}} \mathbb{E} (\tilde A_{i_1i_2} \tilde A_{i_2i_3}...\tilde A_{i_{\ell},i_{\ell+1}})\,.
\end{equation}
by Isserlis' theorem, we have:
\begin{eqnarray*}
    \left|\mathbb{E} (\tilde A_{i_1,i_2} \tilde A_{i_2,i_3}...\tilde A_{i_{\ell},i_{\ell+1}})\right| &\leq& \left|\sum_{\Pi} \prod_{\{j,k\}\in \Pi} \mathbb{E}(\overline{A}_{i_ji_{j+1}} \overline{A}_{i_ki_{k+1}}) \right| \\
    &\leq& \sum_{\Pi} \prod_{\{j,k\}\in \Pi} \mathbb{E}(\overline{A}_{i_ji_{j+1}} \overline{A}_{i_ki_{k+1}})\quad=\quad \mathbb{E} \left(\overline{A}_{i_1i_2} \cdots \overline{A}_{i_{\ell}i_{\ell+1}}\right)\, .
\end{eqnarray*}
From this, we deduce that $|C_{\ell}| \le \overline{C}_{\ell}$, hence $|\mathbb{E} \widetilde{R}_k(\tilde A)| \le \mathbb{E} \widetilde{R}_k(\overline{A})$. This gives the desired bound since $\mathbb{E}\widetilde{R}_k(\overline{A})= {\mathcal O}(1)$.

\end{document}